\pdfoutput=1
\documentclass[a4paper,leqno]{amsart}
\usepackage{amsmath,amsthm,amssymb,mathtools,mathrsfs} 
\usepackage{tikz-cd}

\usepackage[hidelinks,breaklinks]{hyperref}
\usepackage{stmaryrd} %
\usepackage[shortlabels]{enumitem}
\usepackage[utf8]{inputenc}
\usepackage{xcolor} %
\usepackage{a4wide}
\usepackage{microtype}


\usepackage[
style=alphabetic,
doi=false,
isbn=false,
url=false,
eprint=false,
block = space,
sorting=anyt]{biblatex} 
\addbibresource{hr2_e.bib}



\newtheorem{theorem}{Theorem}[section]
\newtheorem{corollary}[theorem]{Corollary}
\newtheorem{proposition}[theorem]{Proposition}
\newtheorem{lemma}[theorem]{Lemma}
\newtheorem{conjecture}[theorem]{Conjecture}

\theoremstyle{definition} 
\newtheorem{definition}[theorem]{Definition}
\newtheorem{theorem-definition}[theorem]{Theorem-Definition}

\newtheorem*{notation*}{Notation}
\newtheorem{setting}{Setting}

\theoremstyle{remark} 
\newtheorem{remark}[theorem]{Remark}
\newtheorem{example}[theorem]{Example}

\newcommand{\longto}{\longrightarrow} 
\newcommand{\isoto}{\stackrel{\sim}{\to}}

\newcommand{\into}{\hookrightarrow}
\newcommand{\longinto}{\lhook\joinrel\longrightarrow}
\newcommand{\onto}{\rightarrow\mathrel{\mkern-14mu}\rightarrow}

\let\smallbar\bar
\renewcommand{\bar}{\overline}
\renewcommand{\subset}{\subseteq}

\renewcommand{\setminus}{\smallsetminus}

\newcommand*\circled[1]{\tikz[baseline=(char.base)]{
		\node[shape=circle,draw,inner sep=2pt] (char) {#1};}}

\renewcommand{\AA}{\mathbb{A}}

\newcommand{\BPS}{\mathrm{BPS}}
\newcommand{\Betti}{\mathrm{Betti}}
\newcommand{\CA}{\mathcal{A}}
\newcommand{\CC}{\mathbb{C}}

\newcommand{\CF}{\mathcal{F}}
\newcommand{\CE}{\mathcal{E}}
\newcommand{\CG}{\mathcal{G}}

\newcommand{\CK}{\mathcal{K}}

\newcommand{\CO}{\mathcal{O}}
\newcommand{\CQ}{\mathcal{Q}}
\newcommand{\CV}{\mathcal{V}}
\newcommand{\Coeff}{\operatorname{Coeff}}
\newcommand{\CoHa}{\mathcal{A}_{0}}
\newcommand{\Coh}{\operatorname{Coh}}
\newcommand{\coarsemap}{p}

\newcommand{\DiagSpace}{\Omega}
\newcommand{\DiagStack}{\mathfrak{Z}}
\newcommand{\DiagNonSplit}{\smallbar{\DiagStack}}
\newcommand{\DiagSplit}{\ddot{\DiagStack}}
\newcommand{\Dol}{\mathrm{Dol}}
\newcommand{\EGLtwo}{q(q-1)^2(q+1)}
\newcommand{\EGm}{q-1}
\newcommand{\ELL}{q}
\newcommand{\EPP}{q+1}
\newcommand{\End}{\operatorname{End}}
\newcommand{\Ext}{\operatorname{Ext}}

\newcommand{\ExactNonSplit}{\smallbar{\ExactStack}}
\newcommand{\ExactSplit}{\ddot{\ExactStack}}
\newcommand{\ExactStack}{\mathfrak{X}}
\newcommand{\FB}{\mathfrak{B}}
\newcommand{\FI}{\mathfrak{I}}

\newcommand{\FX}{\mathfrak{X}}

\newcommand{\FreeLie}{\operatorname{FreeLie}}
\newcommand{\GG}{\mathbb{G}}
\newcommand{\GL}{\mathrm{GL}}
\newcommand{\Gr}{\operatorname{Gr}}
\renewcommand{\H}{H}
\newcommand{\Hc}{H_c}

\newcommand{\Hom}{\operatorname{Hom}}
\newcommand{\Hsst}{H\text{-sst}}
\newcommand{\Hst}{H\text{-st}}

\newcommand{\IHc}{IH_c}
\newcommand{\IsoPairs}{\mathfrak{D}}

\newcommand{\Jac}{\operatorname{Jac}}
\newcommand{\LL}{\mathbb{L}}
\newcommand{\QQ}{\mathbb{Q}}

\newcommand{\MHS}{\mathbf{MHS}}

\newcommand{\MHSboundedabove}{\MHS^{-}}
\newcommand{\ModuliSpace}{\mathcal{M}}
\newcommand{\ModuliStack}{\mathfrak{M}}
\newcommand{\MSym}{\Sym}
\newcommand{\MAlt}{\Lambda}

\newcommand{\NonIsoPairs}{\mathfrak{U}}
\newcommand{\Ob}{\operatorname{Ob}}
\newcommand{\OffDiagStack}{\mathfrak{Y}}
\newcommand{\OffDiagNonSplit}{\smallbar{\OffDiagStack}}
\newcommand{\OffDiagSplit}{\ddot{\OffDiagStack}}
\newcommand{\PP}{\mathbb{P}}
\newcommand{\pr}{\mathrm{pr}}

\newcommand{\R}{\operatorname{\mathbf{R}} \kern -2pt}

\newcommand{\RSHom}{\mathbf{R} \kern -3pt \SHom }

\newcommand{\relSpec}{\operatorname{\mathcal{S} \kern -1pt \mathit{pec}}}
\newcommand{\SHom}
	{\operatorname{\mathcal{H} \kern -2pt \mathit{o \kern -1pt m}}}
\newcommand{\SingNonSplit}{\smallbar{\SingStack}}
\newcommand{\SingSpace}{\Sigma}
\newcommand{\SingSplit}{\ddot{\SingStack}}
\newcommand{\SingStack}{\mathfrak{S}}
\newcommand{\SL}{\operatorname{SL}}

\newcommand{\Stacks}{\mathbf{St}}

\newcommand{\Sym}{\operatorname{Sym}}

\newcommand{\sC}{\mathscr{C}}

\newcommand{\sst}{\mathrm{ss}}
\newcommand{\st}{\mathrm{s}}
\newcommand{\Tot}{\operatorname{Tot}}
\newcommand{\tRHom}
	{\tau \mathcal{E} \kern -2pt \mathit{x\kern-1pt t}}

\newcommand{\Varieties}{\mathbf{Var}}
\newcommand{\vdim}{\operatorname{vdim}}
\newcommand{\ZZ}{\mathbb{Z}}

\begin{document}

\title{BPS cohomology for rank 2 degree 0 Higgs bundles (and more)}
\author{Sebastian Schlegel Mejia}
\address{School of Mathematics, University of Edinburgh, Peter Guthrie Tait Road, Edinburgh EH9 3FD, United Kingdom}
\email{s.schlegel-mejia@sms.ed.ac.uk}

\begin{abstract}
We give a formula comparing the E-series of the moduli stacks of rank 2 degree 0 semistable Higgs bundles in genus $g \geq 2$ to intersection E-polynomials of its coarse moduli space. A parallel formula holds in various 2-Calabi--Yau settings, for example for sheaves on K3 surfaces, or preprojective algebras of $g$-loop quivers.
As a consequence we provide evidence for a conjecture of Davison on the BPS cohomology of Higgs bundles, which has implications for non-abelian Hodge theory for stacks.
We apply the formula to cohomological $\chi$-independence tests for BPS cohomology of Higgs bundles and K3 surfaces.
\end{abstract}
\maketitle

\section{Introduction}

The Borel--Moore homology of moduli stacks $\ModuliStack$
of objects in 2-Calabi--Yau categories 
as well as the compactly supported intersection cohomology of 
their coarse moduli spaces $\ModuliSpace$ 
have been at the intersection of many recent advances 
in enumerative invariants in algebraic geometry and 
geometric representation theory
\cite{davison2021Purity2CalabiYauCategories,
davison2020BPSLieAlgebras,
davison2021NonabelianHodgeTheory,
maulik2020EndoscopicDecompositionsHauselThaddeus,
maulik2021CohomologicalChiIndependence,
kapranov2022CohomologicalHallAlgebra,
kinjo2021CohomologicalChiIndependence,
kinjo2021DimensionalReductionCohomological,
kinjo2021GlobalCriticalChart,
sala2020CohomologicalHallAlgebra,
schiffmann2013CherednikAlgebrasWalgebras,
schiffmann2016IndecomposableVectorBundles}.

This paper is an attempt to 
explicitly describe the 
topology of the morphism $p\colon \ModuliStack \to \ModuliSpace$ to the coarse moduli space whenever it is locally modelled on the semi-simplification morphism
\begin{equation*}
\ModuliStack_{2}(\Pi_{S_g}) \to \ModuliSpace_2({\Pi_{S_g}})
\end{equation*}
of two-dimensional representations of the preprojective algebra
of the $g$-loop quiver $S_g$ 
for $g\geq 2$. 

More concretely, 
we prove a formula relating the compactly supported cohomology 
(the dual of the Borel--Moore homology)
of the moduli stack
$\ModuliStack$ with 
the compactly supported intersection cohomology of 
the coarse moduli space $\ModuliSpace$ for the following examples

\begin{enumerate}
\item certain moduli of semistable Higgs bundles on a smooth projective complex
curve $C$ of genus $g\geq 2$
\item certain moduli of $\pi_1(\Sigma_g \setminus \{p\})$-representations 
with prescribed monodromy around the puncture $p$ for a 
closed Riemann surface $\Sigma_g$ of genus $g \geq 2$
\item certain moduli of sheaves on K3 or abelian surfaces
\item moduli of two-dimensional representations 
of the preprojective algebra $\Pi_{S_g}$ of the $g$-loop
quiver $S_g$ for $g\geq 2$.
\end{enumerate}
See Theorem~\ref{thm-general_e-poly} and
Corollary~\ref{cor-general_coh} for a precise formula.

\subsection{Cohomology of moduli spaces of 
rank 2 Higgs bundles}

Let $C$ be a smooth connected projective complex curve 
of genus $g \geq 2$.
Let 
$\ModuliStack^\Dol_{r,d}$
be the moduli stack of rank $r$ degree $d$ semistable Higgs bundles on $C$ 
and let 
$p_{r,d}\colon \ModuliStack^{\Dol}_{r,d} \to \ModuliSpace^{\Dol}_{r,d}$
be its coarse moduli space.

When $\gcd(r,d) = 1$, the moduli space $\ModuliSpace^{\Dol}_{r,d}$ and
moduli stack $\ModuliStack_{r,d}^{\Dol}$ are smooth 
and the morphism $p_{r,d}$  is the trivial $\GG_m$-gerbe over $\ModuliSpace^{\Dol}_{r,d}$ 
\begin{equation*}
p_{r,d}\colon \ModuliStack_{r,d}^{\Dol} = \ModuliSpace_{r,d}^{\Dol}\times B\GG_m 
\longto \ModuliSpace_{r,d}^{\Dol}.
\end{equation*}
By the K\"unneth formula, the compactly supported cohomology of the moduli stack is determined from the compactly supported cohomology of the moduli space. The latter has been studied extensively (see for example \cite{garcia-prada2014MotivesModuliChains}).

When $\gcd(r,d) \neq 1$, the moduli space $\ModuliSpace_{r,d}^{\Dol}$ and moduli 
stack $\ModuliStack_{r,d}^{\Dol}$ are singular
and the morphism $p_{r,d}\colon \ModuliStack_{r,d}^{\Dol} \to 
\ModuliSpace_{r,d}^{\Dol}$ is more complicated than in the coprime case.
In the literature the cohomology of the moduli stack and moduli space have been
studied separately.

The compactly supported intersection cohomology of the moduli spaces $\IHc(\ModuliSpace_{r,d}^{\Dol})$
has been determined for rank 2 and degree 0
in genus $2$ by Felisetti \cite{felisetti2021IntersectionCohomologyModuli} 
and  for arbitrary genus $ g \geq 2$ by Mauri 
\cite{mauri2021IntersectionCohomologyRank}.
Building on work of Schiffmann \cite{schiffmann2016IndecomposableVectorBundles} and Mozgovoy--Schiffmann \cite{mozgovoy2020CountingHiggsBundles} on point counts of the stacks $\ModuliStack_{r,d}^{\Dol}$ over finite fields, Fedorov--Soibelman--Soibelman  in \cite{fedorov2018MotivicClassesModuli} 
find a formula for the class $[\ModuliStack_{r,d}^{\Dol}]$ in the 
Grothendieck ring of stacks $K_0(\Stacks)$.

Set $d=0$, guaranteeing $\gcd(r,d) \neq 1$ for $r > 1$. Consider the 
compactly supported cohomology (with rational coefficients) for all ranks
\begin{equation*}
\CoHa = 
\bigoplus_{r \geq 0} \Hc^\bullet(\ModuliStack^\Dol_{r,0}) 
\otimes \LL^{(1-g)r^2}
\end{equation*}
as an object in
the symmetric monoidal abelian category of (graded) mixed Hodge structures. 
Here $\LL = \Hc^{\bullet}(\AA^1)$ is the mixed Hodge structure 
given by the compactly supported cohomology of the affine line.

Motivated by cohomological Donaldson--Thomas theory
and non-abelian Hodge theory
for stacks
Davison conjectured the following for $\CoHa$.

\begin{conjecture}[{\cite[Conjecture~5.6]{davison2021NonabelianHodgeTheory}}]
\label{conj}
There is an isomorphism of mixed Hodge structures
\begin{equation*} 
\CoHa \cong 
\Sym
\bigg(
\FreeLie
\bigg(\bigoplus_{r\geq 1} \IHc^{\bullet}(\ModuliSpace^{\Dol}_{r,0}) 
\otimes \LL^{(1-g)r^2-1}
\bigg) 
\otimes \Hc^{\bullet}(B\GG_m) \otimes \LL
\bigg)
\end{equation*}
where $\Sym$ is taken in the graded sense and 
$\FreeLie(V^\bullet)$ denotes the free graded 
Lie algebra generated by 
the graded (super) vector space $V^\bullet$.
\end{conjecture}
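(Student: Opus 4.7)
The plan is to reduce the conjecture to a freeness statement for the BPS Lie algebra, then to verify it at the level of E-polynomials using Theorem~\ref{thm-general_e-poly}.

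First, invoke the cohomological integrality theorem for moduli of semistable objects in 2-Calabi--Yau categories, which supplies a BPS Lie algebra $\Fg_\BPS = \bigoplus_{r\geq 1}\Fg_{\BPS,r}$ and a PBW-type isomorphism of MHS
\begin{equation*}
\CoHa \cong \Sym\bigl(\Fg_\BPS \otimes \Hc^\bullet(B\GG_m)\bigr).
\end{equation*}
Under this isomorphism, Conjecture~\ref{conj} is equivalent to the single assertion
\begin{equation*}
\Fg_\BPS \cong \FreeLie\!\left(\bigoplus_{r\geq 1}\IHc^\bullet(\ModuliSpace^\Dol_{r,0})\otimes\LL^{(1-g)r^2-1}\right)\otimes \LL,
\end{equation*}
which bundles together two facts: the BPS sheaf of $\ModuliSpace^\Dol_{r,0}$ coincides (up to Tate twist) with its intersection complex, and the resulting Lie algebra is \emph{free} on these generators.

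Second, test this at the level of E-polynomials. Theorem~\ref{thm-general_e-poly} expresses $E(\CoHa)$ in terms of the series $E\bigl(\IHc^\bullet(\ModuliSpace^\Dol_{r,0})\bigr)$. Applying the plethystic logarithm to both sides of the conjectural identity and expanding $\FreeLie$ via the Witt formula, the conjecture predicts a specific E-series for $\Fg_\BPS$ that can be compared term by term with the formula of this paper. A match here is precisely the evidence advertised in the abstract, and constitutes what one can realistically prove with the tools at hand.

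The principal obstacle in going from numerical evidence to a genuine isomorphism of MHS is twofold: one must identify the BPS sheaf with the IC sheaf of $\ModuliSpace^\Dol_{r,0}$ as perverse sheaves, and one must prove freeness of $\Fg_\BPS$ — neither of which is automatic. In rank 2 one can hope to exhibit free generators by hand, since strictly semistable rank 2 degree 0 Higgs bundles are direct sums of distinct rank 1 Higgs bundles and the CoHa product therefore has very constrained structure. Extending freeness to all ranks appears to require a new input — wall-crossing, dimensional reduction, or a direct geometric construction of Lie brackets from nested Ext classes — and this is the essential difficulty.
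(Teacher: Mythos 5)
The statement you are asked about is a \emph{conjecture} (Davison's Conjecture~5.6), and the paper does not prove it; it only establishes the rank~$2$ consequence (Theorem~\ref{thm-main_mixedhodgestructure}, via Theorem~\ref{thm-general_e-poly} and Corollary~\ref{cor-general_coh}). Your proposal, to its credit, does not pretend otherwise: the reduction you describe --- use cohomological integrality (known for Higgs bundles by Kinjo--Koseki) to rewrite $\CoHa$ as $\Sym$ of the BPS cohomology tensored with $\Hc^\bullet(B\GG_m)\otimes\LL$, so that the conjecture becomes the assertion that $\bigoplus_{r}\BPS_{r}$ is the free Lie algebra on $\bigoplus_r \IHc^\bullet(\ModuliSpace^\Dol_{r,0})\otimes\LL^{(1-g)r^2-1}$ --- is exactly the framework of Section~\ref{sec-BPS}, and your observation that only an E-polynomial verification is within reach matches what the paper actually delivers. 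In rank~$2$ the degree-$2$ piece of the free Lie algebra is $\IHc^\bullet(\ModuliSpace_{2,0}^\Dol)\otimes\LL^{3-4g}\oplus\Lambda^2(\Hc^\bullet(\ModuliSpace_{1,0}^\Dol)\otimes\LL^{-g})$, and Corollary~\ref{cor-e_BPS_gen} / Corollary~\ref{cor-general_coh} confirm this for E-polynomials and, under purity, for mixed Hodge structures. So your plan and the paper agree on strategy; neither constitutes a proof of the conjecture.

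Two corrections to your discussion. First, it is not quite right to say the conjecture asserts that ``the BPS sheaf coincides with the IC sheaf'': the free Lie algebra in degree $r$ contains, besides the generator $\IHc^\bullet(\ModuliSpace_{r,0}^\Dol)$, all brackets of lower-degree generators (in degree $2$ this is the $\Lambda^2$ summand), so $\BPS_{2v}$ is strictly larger than the intersection cohomology. Second, strictly semistable rank~$2$ degree~$0$ Higgs bundles are \emph{not} all direct sums of distinct rank~$1$ pieces: the stack also contains non-split extensions and the locus $L^{\oplus 2}$, and it is precisely the stratification into these pieces ($\OffDiagNonSplit$, $\DiagNonSplit$, $\OffDiagSplit$, $\DiagSplit$ in Section~\ref{sec-stack_e-poly}) that carries the actual computation. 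Your proposed route via plethystic logarithm and the Witt formula would in principle extract the same numerical prediction, but the paper's evidence comes from a direct motivic stratification of $\ModuliStack_2^{\sst}$ combined with Mauri's formula for $IE(\ModuliSpace_2^{\sst})$, not from manipulating the generating series of the conjecture itself.
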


\begin{remark}
Conjecture~4.6 in \cite{davison2021NonabelianHodgeTheory} is the corresponding conjecture on the other side of non-abelian Hodge theory. Conjecture 7.7 in \cite{davison2020BPSLieAlgebras} is a related conjecture in the setting of preprojective algebras of quivers.
\end{remark}

The original motivation for this work was to 
provide evidence towards this conjecture.
Conjecture~\ref{conj} predicts the following formula for the
compactly supported cohomology of the moduli stack of rank 2
degree 0 Higgs bundles.

\begin{theorem}
\label{thm-main_mixedhodgestructure}
There is an isomorphism of graded mixed Hodge structures
\begin{equation*}
\begin{split}
\Hc^\bullet(\ModuliStack^\Dol_{2,0}) \otimes \LL^{4-4g} 
&\cong
\IHc^\bullet(\ModuliSpace^\Dol_{2,0}) \otimes \LL^{3-4g} 
\otimes \Hc^\bullet(B\GG_m) \otimes \LL\\
&\qquad
\oplus \Lambda^2(\Hc^\bullet(\ModuliSpace_{1,0}^{\Dol})
\otimes \LL^{-g}) 
\otimes \Hc^{\bullet}(B\GG_m) \otimes \LL
\\
&\qquad
\oplus \Sym^2(\Hc^{\bullet}(\ModuliSpace_{1,0}^{\Dol})
\otimes \LL^{-g}\otimes \Hc^{\bullet}(B\GG_m)\otimes \LL).
\end{split}
\end{equation*}
where the alternating square $\Lambda^2$ and the symmetric square $\Sym^2$ 
are taken in the symmetric monoidal abelian category of graded mixed Hodge structures.
\end{theorem}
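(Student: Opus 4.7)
The plan is to derive Theorem~\ref{thm-main_mixedhodgestructure} as a direct specialisation of the general cohomological formula in Corollary~\ref{cor-general_coh} to the rank $2$ degree $0$ Higgs case, by stratifying $\ModuliStack^\Dol_{2,0}$ according to Jordan--Hölder type and identifying each contribution explicitly. The stable locus $\ModuliStack^{\Dol,\mathrm{st}}_{2,0}$ is a trivial $\GG_m$-gerbe over the smooth open locus $\ModuliSpace^{\Dol,\mathrm{st}}_{2,0}$, so by purity and decomposition theorem arguments it contributes (the full) $\IHc^\bullet(\ModuliSpace^\Dol_{2,0})$ tensored with $\Hc^\bullet(B\GG_m)$, up to a Tate twist accounting for the difference between the virtual dimension $\vdim \ModuliStack^\Dol_{2,0} = 8(g-1)$ of the stack (which produces the $\LL^{4-4g}$ shift on the left-hand side) and that of the coarse space (producing $\LL^{3-4g}$ together with an extra $\LL$ absorbed into the $B\GG_m$-factor).

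The strictly semistable locus decomposes, according to whether the Jordan--Hölder factors are isomorphic, as an off-diagonal piece $\OffDiagStack$ parametrising polystables $L_1 \oplus L_2$ with $L_1\not\cong L_2$ and a diagonal piece $\DiagStack$ parametrising $L\oplus L$. First I would show that $\OffDiagStack$ is \'etale-locally the quotient of an extension stack by the $S_2$-action swapping the two line bundles, and that, after passing to cohomology and applying the general formula locally, the $S_2$-alternating piece survives; this yields precisely $\Lambda^2(\Hc^\bullet(\ModuliSpace_{1,0}^{\Dol}) \otimes \LL^{-g}) \otimes \Hc^\bullet(B\GG_m)\otimes \LL$, the Tate twist $\LL^{-g}$ coming from the shift between $\Hc^\bullet(\ModuliSpace_{1,0}^{\Dol})$ (with $\dim \ModuliSpace_{1,0}^{\Dol} = 2g$) and its rank-$1$ stack counterpart. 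The diagonal $\DiagStack$ is modelled on two-dimensional representations of $\Pi_{S_g}$ times $\ModuliSpace_{1,0}^{\Dol}$, and its contribution, computed from the preprojective-algebra case in Corollary~\ref{cor-general_coh}, recombines with the $B\GG_m$-factor of the rank $1$ stack to produce the symmetric square $\Sym^2$ of the rank $1$ stacky cohomology, matching the third summand.

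The main technical obstacle will be bookkeeping: every cohomology group on both sides must be identified as an actual mixed Hodge structure rather than just at the level of $E$-polynomials, so I will need to verify that the identifications made in applying Theorem~\ref{thm-general_e-poly} lift to the graded mixed Hodge category. For the stable contribution this uses purity of intersection cohomology; for the off-diagonal piece it requires the $S_2$-equivariance of the local model to be realised at the Hodge-theoretic level; for the diagonal piece it requires that the decomposition into $\Sym^2$ and $\Lambda^2$ coming from the $S_2$-action on $(\Hc^\bullet(\ModuliSpace_{1,0}^{\Dol})\otimes \LL^{-g}\otimes \Hc^\bullet(B\GG_m)\otimes \LL)^{\otimes 2}$ respects the mixed Hodge structure and combines correctly with the off-diagonal piece (indeed, any cross term of the form $\Sym^2 \oplus \Lambda^2 = V\otimes V$ for the non-isomorphic factors must be re-grouped so that the $\Sym^2$ and $\Lambda^2$ pieces end up in the claimed summands). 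Once this matching is done, the three contributions assemble into the stated formula.
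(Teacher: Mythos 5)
Your opening sentence identifies the right skeleton (specialise Corollary~\ref{cor-general_coh} to the rank $2$ degree $0$ Higgs setting), but the execution you then describe diverges from anything that would work, in two essential ways. First, the stratum-by-stratum matching is not correct as stated. The stable locus does \emph{not} contribute the full $\IHc^\bullet(\ModuliSpace^\Dol_{2,0})$: the coarse space is singular along $\Sigma \cong \Sym^2(\ModuliSpace_{1,0}^{\Dol})$, and $IE(\ModuliSpace_{2,0}^{\Dol}) - E(\ModuliSpace_{2,0}^{\Dol,\st})$ involves nontrivial correction terms supported on $\Sigma$; pinning these down is exactly the role of Mauri's formula (Theorem~1.3 of that paper), which is a key input you never invoke. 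Those corrections get redistributed across all three summands when one compares with the stack computation, so there is no clean assignment of strata to summands. Relatedly, the $\Sym^2$ summand arises from the \emph{split off-diagonal} locus $\OffDiagSplit$ (polystables $L_1\oplus L_2$ with $L_1\ncong L_2$, which is $\Sym^2(\ModuliSpace_1^{\st}/\GG_m)$ minus a diagonal correction), not from the diagonal locus as you claim; the diagonal strata only contribute terms linear in $E(\ModuliSpace_{1,0}^{\Dol})$.

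Second, and more fundamentally, your mechanism for upgrading from $E$-polynomials to an isomorphism of mixed Hodge structures would fail. A stratification into locally closed pieces yields only long exact sequences in compactly supported cohomology, not direct-sum decompositions of mixed Hodge structures, so ``verifying that the identifications lift to the graded mixed Hodge category'' stratum by stratum is not a viable route and produces no actual morphism between the two sides. The paper's argument is entirely different at this step: the cut-and-paste computation is carried out only at the level of $E$-series (Theorem~\ref{thm-general_e-poly}); then the cohomological integrality theorem identifies $\Hc^\bullet(\ModuliStack_{2,0}^{\Dol})\otimes\LL^{4-4g}$ with the second graded piece of $\Sym\bigl(\bigoplus_{r}\BPS_{rv}\otimes\Hc^\bullet(B\GG_m)\otimes\LL\bigr)$ and, via \cite[Theorem~6.6]{davison2021Purity2CalabiYauCategories}, supplies a \emph{canonical inclusion} of the right-hand side into the left-hand side; finally, purity of $\Hc^\bullet(\ModuliStack_{2,0}^{\Dol})$ plus the equality of $E$-polynomials forces this inclusion to be an isomorphism. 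That canonical map, together with purity, is the missing idea in your proposal --- without it you have no morphism to compare, and equality of $E$-polynomials alone cannot produce one.
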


Let $q=E(\LL)$. We deduce Theorem~\ref{thm-main_mixedhodgestructure} from the following equality of E-series
(see Corollary~\ref{cor-general_coh}).
\begin{theorem}
\label{thm-main_e-polynomial}
\begin{equation*}
\begin{split}
\frac{E(\ModuliStack^\Dol_{2,0})}{\ELL^{4g-4}} &=
\frac{1}{\ELL^{4g-3}}E(\IHc^{\bullet}(\ModuliSpace_{2,0}^\Dol))
E(\Hc^\bullet(B\GG_m) \otimes \LL) 
\\
&\qquad+
\frac{1}{\ELL^{2g}}E(\Lambda^2(\Hc^\bullet(\ModuliSpace_{1,0}^{\Dol})))
E(\Hc^\bullet(B\GG_m) \otimes \LL)
\\
&\qquad +
\frac{1}{\ELL^{2g}}E(\Sym^2(\Hc^\bullet(\ModuliSpace_{1,0}^{\Dol})\otimes \Hc^\bullet(B\GG_m) \otimes \LL ))
\end{split}
\end{equation*}
\end{theorem}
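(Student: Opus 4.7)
The plan is to stratify $\ModuliStack^{\Dol}_{2,0}$ according to the Jordan--Hölder type of semistable Higgs bundles and match the compactly supported E-polynomial of each stratum with the corresponding term on the right. I would decompose the stack into the open stable substack $\ModuliStack^{\Dol,s}_{2,0}$, which is a trivial $\GG_m$-gerbe over the smooth locus $\ModuliSpace^{\Dol,s}_{2,0}$, together with the closed strictly semistable substack. Every strictly semistable rank $2$ degree $0$ Higgs bundle fits into a short exact sequence $0 \to L_1 \to E \to L_2 \to 0$ with $L_1, L_2$ rank $1$ degree $0$ Higgs bundles, so the latter substack further stratifies as (polystable vs.\ non-split) $\times$ (diagonal $L_1 \cong L_2$ vs.\ off-diagonal $L_1 \not\cong L_2$), giving the four strata $\OffDiagSplit, \OffDiagNonSplit, \DiagSplit, \DiagNonSplit$.

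Next, I would compute each stratum's E-polynomial by realising it as a quotient stack: the off-diagonal polystable locus is essentially $[(\ModuliSpace^{\Dol}_{1,0} \times \ModuliSpace^{\Dol}_{1,0} \setminus \Delta)/(\GG_m^{2} \rtimes S_2)]$, the diagonal polystable locus is $[\ModuliSpace^{\Dol}_{1,0}/\GL_2]$ with $\GL_2$ acting trivially on the base, and the non-split strata are affine-bundle quotients over their polystable shadows whose ranks are dictated by the corresponding $\Ext^{1}$-dimensions. The $S_2$-symmetry present on the off-diagonal side is what produces the $\Lambda^2$-structure in the second term of the right-hand side, while the larger $\GL_2$-automorphism group on the diagonal side, together with the self-extension space, is what produces the $\Sym^2$-structure in the third.

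The crucial external input is Mauri's explicit formula \cite{mauri2021IntersectionCohomologyRank} for $E(\IHc^{\bullet}(\ModuliSpace^{\Dol}_{2,0}))$, which decomposes the intersection E-polynomial into the E-polynomial of the smooth locus $\ModuliSpace^{\Dol,s}_{2,0}$ plus a correction supported on the singular locus $\Sym^{2}(\ModuliSpace^{\Dol}_{1,0})$. Combined with the explicit form $E(\ModuliSpace^{\Dol}_{1,0}) = E(\Jac(C)) \cdot q^{g}$ coming from $\ModuliSpace^{\Dol}_{1,0} = T^{*}\Jac(C)$, this lets one expand the right-hand side in closed form and compare it termwise with the sum of the stratum-wise contributions on the left.

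The main obstacle is reconciling the stable-stratum contribution $E(\ModuliSpace^{\Dol,s}_{2,0}) / E(\GG_m)$ on the stack side with the first term on the right, which contains the full intersection E-polynomial $E(\IHc^{\bullet}(\ModuliSpace^{\Dol}_{2,0}))$ rather than just the smooth-locus E-polynomial. The discrepancy between these, supported on the singular locus via Mauri's correction, must precisely cancel the ``excess'' contributions of the four strictly semistable strata beyond the clean $\Lambda^2$ and $\Sym^2$ pieces. Carrying out this cancellation is the combinatorial core of the argument; it depends sensitively on the form of Mauri's correction term and on careful bookkeeping of the quotient-stack E-polynomials, in particular the jump in the polystable automorphism group from $\GG_m \times \GG_m$ on the off-diagonal locus to $\GL_2$ on the diagonal locus and the resulting $E(\Hc^{\bullet}(B\GG_m) \otimes \LL)$ factor appearing uniformly across the three terms on the right.
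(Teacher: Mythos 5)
Your proposal follows essentially the same route as the paper: the same five-stratum decomposition (stable locus plus the split/non-split, diagonal/off-diagonal strictly semistable strata), the same quotient-stack and affine-bundle (Picard-stack) descriptions of each stratum with ranks governed by the relevant $\Ext^1$ groups, and the same final comparison against Mauri's formula for the intersection E-polynomial, with the cancellation against the singular-locus correction term being the combinatorial core. This matches the paper's argument in Section~\ref{sec-stack_e-poly} and the proof of Theorem~\ref{thm-general_e-poly}.
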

\begin{remark}
	The RHS can be made more explicit using $E(\Hc^\bullet(B\GG_m) \otimes \LL) = q/q-1$ and Lemma~\ref{lm-epoly_quot_stacks}.
\end{remark}
\subsection{Other settings}

Although first intended for moduli of semistable rank 2 degree 0 Higgs bundles, the calculation we give in Section~\ref{sec-stack_e-poly} 
works in many other settings as well.
 
To get an analogue of the formula 
in Theorem~\ref{thm-main_e-polynomial} for other settings
we need as input an integer $g\geq 2$,
 a stack $\ModuliStack_2^{\sst}$, 
and spaces $\ModuliSpace_2^{\sst}, \ModuliSpace_1^{\st}$
which play the role of 
$\ModuliStack^{\Dol}_{2,0}$ and 
$\ModuliSpace_{2,0}^{\Dol},\ModuliSpace_{1,0}^{\Dol}$, respectively.

\begin{setting}
[More Higgs bundles] 
\label{set-Higgs}
Let $(r,d) \in \ZZ_{\geq 1} \times \ZZ$ such that
$\gcd(r,d) = 1$. We consider rank $2r$ and degree $2d$
semistable Higgs bundles 
on a smooth connected projective curve $C$ of genus $g_0 \geq 2$ . In this setting we take 
\begin{equation*}
g = r^2(g_0-1)-1,\; 
\ModuliStack_2^{\sst} =\ModuliStack_{2r,2d}^{\Dol}, \;
\ModuliSpace_2^{\sst} = \ModuliSpace_{2r,2d}^{\Dol},\; 
\ModuliSpace_1^{\st} = \ModuliSpace_{r,d}^{\Dol}.
\end{equation*}
\end{setting}

\begin{setting}
[Character stacks]
\label{set-pi_1-mod}
Let $\Sigma_{g_0}$ be a closed connected Riemann surface of genus $g_0\geq 2$ 
and fix a point $p\in \Sigma_{g_0}$.
Let $\ModuliStack_{g_0,r,d}^{\Betti}$ be the moduli stack or
$r$-dimensional representations of 
$\pi_1(\Sigma_{g_0} \setminus \{p\})$ with
monodromy around the puncture given by $e^{2d \pi i / r}$ and
let $\ModuliSpace_{g_0,r,d}$ denote its coarse moduli space.
Let $(r,d) \in \ZZ_{\geq 1} \times \ZZ$ such that $\gcd(r,d)=1$.
In this setting we take
\begin{equation*}
g = r^2(g_0-1)-1,\;
\ModuliStack_2^{\sst} = \ModuliStack^{\Betti}_{g_0,2r,2d}, \;
\ModuliSpace_2^{\sst} = \ModuliSpace^{\Betti}_{g_0,2r,2d}, \;
\ModuliSpace_1^{\st} = \ModuliSpace^{\Betti}_{g_0,r,d}.
\end{equation*}
\end{setting}

\begin{setting}
[K3 and abelian surfaces] 
\label{set-cpt2CY}
Let $S$ be a K3 or abelian surface
and $H$ an ample class on $S$. 
For every Mukai vector $v \in H^{\bullet}_{\mathrm{alg}}(S,\ZZ)$
let $\ModuliStack^{\Hsst}_{S,v}$ and $\ModuliSpace^{\Hsst}_{S,v}$
be the moduli stack and moduli space, respectively,
of $H$-semistable sheaves with Mukai vector $v$ on $S$.
Consider a primitive Mukai vector 
$w \in \H^{\bullet}_{\mathrm{alg}}(S,\ZZ)$
with $w^2 \geq 0$ such that $H$ is generic with respect to $w$.
In this setting we take 
\begin{equation*}
g = \frac{w^2+2}{2},\;
\ModuliStack_{2}^{\sst} = \ModuliStack^{\Hsst}_{S,2w},\;
\ModuliSpace_2^{\sst} = \ModuliSpace^{\Hsst}_{S,2w}, \;
\ModuliSpace_1^{\st} = \ModuliSpace^{\Hst}_{S,w}.
\end{equation*}
\end{setting}

\begin{setting}
[Preprojective algebra of the $g$-loop quiver]
\label{set-preproj}
Let $S_g$ be the $g$-loop quiver.
Let $\ModuliStack_{d}(\Pi_{S_g})$ be the moduli stack 
and $\ModuliSpace_{d}(\Pi_{S_g})$ the coarse moduli space of 
$d$-dimensional representations of the preprojective algebra of $S_g$. 
In this setting we take
\begin{equation*}
g = g, \;
\ModuliStack_{2}^{\sst} = \ModuliStack_2(\Pi_{S_g}),\;
\ModuliSpace_{2}^{\sst} = \ModuliSpace_{2}(\Pi_{S_g}),\;
\ModuliSpace_{1}^{\st} = \ModuliSpace_{1}(\Pi_{S_g}).
\end{equation*}
\end{setting}

\begin{theorem}
\label{thm-general_e-poly}
Let $(g,\ModuliStack_{2}^{\sst},\ModuliSpace_{2}^{\sst},\ModuliSpace_{1}^{\st})$ be as in one of Settings~1-4. Then

\begin{equation*}
\begin{split}
\frac{E(\ModuliStack_{2}^\sst)}{\ELL^{4g-4}} &=
\frac{1}{\ELL^{4g-3}}E(\IHc^{\bullet}(\ModuliSpace_{2}^\sst))
E(\Hc^\bullet(B\GG_m)\otimes \LL) 
\\
&\qquad+
\frac{1}{\ELL^{2g}}E(\Lambda^2(\Hc^\bullet(\ModuliSpace^\sst_{1})))
E(\Hc^\bullet(B\GG_m) \otimes \LL)
\\
&\qquad +
\frac{1}{\ELL^{2g}}E(\Sym^2(\Hc^\bullet(\ModuliSpace_{1}^\sst)\otimes \Hc^\bullet(B\GG_m)\otimes \LL)).
\end{split}
\end{equation*}
\end{theorem}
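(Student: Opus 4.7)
The plan is to stratify the stack $\ModuliStack_2^\sst$ by polystable type of closed points and compute the compactly supported $E$-polynomial of each stratum. All four settings share a $2$-Calabi--Yau property, so at each strictly semistable point the coarse moduli map $p\colon \ModuliStack_2^\sst \to \ModuliSpace_2^\sst$ is \'etale-locally modeled on the semi-simplification morphism $\ModuliStack_2(\Pi_{S_g}) \to \ModuliSpace_2(\Pi_{S_g})$ for $2$-dimensional representations of the preprojective algebra of the $g$-loop quiver. This reduces the nontrivial local computations to a single, uniform quiver calculation.

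Write $\ModuliStack_2^\sst = \ModuliStack_2^\st \sqcup \OffDiagStack \sqcup \DiagStack$, where $\OffDiagStack$ parametrizes semistables whose polystabilization is $L_1 \oplus L_2$ with $L_1 \not\cong L_2$ and $\DiagStack$ those whose polystabilization is $L \oplus L$. Additivity of compactly supported $E$-polynomials yields
\[
E(\ModuliStack_2^\sst) = E(\ModuliStack_2^\st) + E(\OffDiagStack) + E(\DiagStack).
\]
The stable stratum $\ModuliStack_2^\st$ is a $\GG_m$-gerbe over $\ModuliSpace_2^\st$, contributing a factor $E(\Hc^\bullet(B\GG_m))$. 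The off-diagonal stratum is built from the stack $\ExactStack$ of short exact sequences of two non-isomorphic rank-$1$ stables modulo the $S_2$-swap; its $E$-series is controlled by $E(\ModuliSpace_1^\st)$, the $2$-CY $\Ext^1$ dimension, and the $(B\GG_m)^2$-banding, and produces the $\Lambda^2$ term. The diagonal stratum, via the local model, is fibered over $\ModuliSpace_1^\st$ with fiber the stack of $2$-dimensional $\Pi_{S_g}$-representations with fixed semisimplification $L \oplus L$; an explicit computation of this fiber (a nilpotent variety modulo $\GL_2$) matches the $\Sym^2$ term.

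To reconcile the stable contribution $E(\ModuliSpace_2^\st) \cdot E(\Hc^\bullet(B\GG_m))$ with the $\IHc^\bullet(\ModuliSpace_2^\sst)$ appearing on the right-hand side, I would apply the decomposition theorem to $\ModuliSpace_2^\st \hookrightarrow \ModuliSpace_2^\sst$. The local analytic structure of $\ModuliSpace_2^\sst$ along $\SingSpace$ is a conical symplectic singularity coming from the quiver variety $\ModuliSpace_2(\Pi_{S_g})$, whose IC stalks are explicit. The resulting identity expressing $E(\IHc^\bullet(\ModuliSpace_2^\sst))$ in terms of $E(\ModuliSpace_2^\st)$ plus local corrections along $\SingSpace$ combines with the computations of $E(\OffDiagStack)$ and $E(\DiagStack)$, and after dividing by $\ELL^{4g-4}$ yields the three terms of the theorem.

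The hardest step is the explicit computation of the diagonal stratum and its interaction with the IC stalks at $\DiagSpace$: one must evaluate the $E$-polynomial of the stack of $2$-dimensional $\Pi_{S_g}$-representations at the most singular point and verify that, together with the BBD correction, it produces exactly the claimed $\Sym^2$ expression with the prefactor $1/\ELL^{2g}$. The combinatorial bookkeeping --- the $S_2$-action on the off-diagonal pair, $\GL_2$ versus $\GG_m \times \GG_m$ on the diagonal fiber, and the $\LL$-twist arising from the normalization --- must line up precisely.
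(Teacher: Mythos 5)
Your stratification of $\ModuliStack_2^{\sst}$ by polystable type is the same skeleton the paper uses, and the local $2$CY/Ext-quiver input you invoke is indeed what drives the stratum-by-stratum computation. The paper, however, refines each of $\OffDiagStack$ and $\DiagStack$ further into the loci where the semistabilizing short exact sequence is split versus non-split: on the non-split locus the middle-term map from the stack of short exact sequences is an isomorphism and the computation is a Picard-stack ($\tau^{\leq 0}\R\SHom$) calculation, while on the split locus it is a double cover (off-diagonal) or the $\PP^1$-fibration $\ModuliSpace_1^{\st}/B \to \ModuliSpace_1^{\st}/\GL_2$ (diagonal). Your clean assignment ``off-diagonal produces the $\Lambda^2$ term, diagonal produces the $\Sym^2$ term'' is not how the terms actually arise: both $\Sym^2$ and $\Lambda^2$ contributions come out of the split off-diagonal locus via the identity for $E(\Sym^2(X/\GG_m))$ (Lemma~\ref{lm-epoly_quot_stacks}), and the final grouping uses $E(\ModuliSpace_1^{\st})^2=E(\Sym^2(\ModuliSpace_1^{\st}))+E(\Lambda^2(\ModuliSpace_1^{\st}))$. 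This is repairable bookkeeping rather than a conceptual error.

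The genuine gap is the intersection-cohomology comparison. The paper does not derive $E(\IHc^{\bullet}(\ModuliSpace_2^{\sst}))$ from local IC-stalk computations; it imports \cite[Theorem~1.3]{mauri2021IntersectionCohomologyRank}, which supplies, uniformly across all four settings, the exact correction terms relating $IE(\ModuliSpace_2^{\sst})$ to $E(\ModuliSpace_2^{\sst})$, $E(\Sym^2(\ModuliSpace_1^{\st}))$, $E(\Lambda^2(\ModuliSpace_1^{\st}))$ and $E(\ModuliSpace_1^{\st})$; Theorem~\ref{thm-general_e-poly} then follows by matching this against the stack-side formula of Theorem~\ref{thm-main_calc}. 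Your plan to obtain these corrections by computing the IC stalks of the local conical symplectic singularity along $\Sigma\setminus\Omega$ and, especially, along the deepest stratum $\Omega$ (the $g$-loop quiver variety at dimension vector $2$) is precisely the content of the Felisetti--Mauri computations and is a substantial piece of work that you have named but not performed. Note also that the decomposition theorem applies to proper morphisms, not to the open immersion $\ModuliSpace_2^{\st}\into\ModuliSpace_2^{\sst}$; what is actually required is a Kirwan/O'Grady-type resolution to which one applies the decomposition theorem and from which one extracts the stalks of the IC complex. Either carry out that computation or cite the known intersection E-polynomial formula; as written, the right-hand side of the theorem is not reached.
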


\begin{remark}
	A key ingredient for the proof is Theorem~1.3 in \cite{mauri2021IntersectionCohomologyRank} which provides a ``uniform'' formula for the intersection E-polynomial of the moduli spaces $\ModuliSpace_2^{\sst}$ for each of the settings. See Proposition~\ref{thm-Mauri-computation}. 
\end{remark}

\subsection{BPS cohomology}
\label{sec-BPS}
As mentioned, the motivation for Conjecture~\ref{conj} 
comes from 
cohomological Donaldson--Thomas theory for 2CY-categories. 
In particular Conjecture~\ref{conj} can be interpreted as a statement about the BPS-cohomology for Higgs bundles.
In this section we give a short introduction to BPS-cohomology of 2CY-categories.

The foundations of cohomological Donaldson--Thomas theory for 2-Calabi--Yau categories have been laid out in \cite{davison2021Purity2CalabiYauCategories,
davison2017IntegralityConjectureCohomology,
davison2020BPSLieAlgebras,
davison2021NonabelianHodgeTheory,
davison2016CohomologicalHallAlgebras,
kinjo2021CohomologicalChiIndependence} 
and we refer the reader to these papers for more details.
\\

Let $\sC$ be a 2-Calabi--Yau (2CY) abelian category,
i.e., 
there are natural non-degenerate graded-symmetric pairings
\begin{equation}
\label{eq-2CY}
\Ext_{\sC}^{\bullet}(E,F) \times \Ext_{\sC}^{2-\bullet}(F,E) \longto \CC
\end{equation}
for all $E,F \in \sC$.

By the 2CY-property, the Euler pairing $\chi(E,F) = \chi(\Ext_{\sC}^{\bullet}(E,F))$,
for $E,F \in \sC$,
defines a symmetric pairing on the Grothendieck group $K_0(\sC)$.
Assume the Euler pairing is the pullback along a group homomorphism $\gamma\colon K_0(\sC) \to \ZZ^n$
of a symmetric bilinear form on $\ZZ^n$,
which by abuse of notation we denote by $\chi$.

Let $v \in \ZZ^n$ be a primitive element.
For every $r \geq 0$ let $\ModuliStack_{rv}$ be
the moduli stack of objects in $E \in \sC$ of class $rv$
, that is, $\gamma(E) = rv$. 

We require a notion of \emph{semistable} objects in the category $\sC$, which is additive in short exact sequences: if $E'\into E \onto E''$ is a short exact sequence with $E',E''$ semistable, then $E$ is also semistable.
Imposing semistability must define open substacks of finite type
$\ModuliStack_{rv}^{\sst} \subset \ModuliStack_{rv}$ that  admit a good moduli space
$\ModuliStack^{\sst}_{rv} \to \ModuliSpace_{rv}^{\sst}$ and have virtual dimension $\vdim(\ModuliStack_{rv}^\sst) = r^2\chi(v,v)$.

The total cohomology
\begin{equation*}
\CA_{\sC,v}^{\sst} = \bigoplus_{r\geq 0} \Hc^{\bullet}(\ModuliStack_{rv}^{\sst}) \otimes \LL^{\vdim(\ModuliStack_{rv}^{\sst})/2}
\end{equation*}
is an important object of study in the subject of cohomological Donaldson--Thomas theory.

\begin{definition}\label{def-cohintconj}
We say that $\CA_{\sC,v}^{ss}$ \emph{satisfies 
the cohomological integrality conjecture} if there
exists mixed Hodge structures $\BPS_{rv}$
of finite total dimension
and an isomorphism of 
$\ZZ_{\geq 0}$-graded mixed Hodge structures 
\begin{equation*}
\CA_{\sC,v}^{\sst} \cong \Sym\bigg(\bigoplus_{r\geq 1} \BPS_{rv} 
\otimes \Hc^{\bullet}(B\GG_m)\otimes \LL \bigg).
\end{equation*}
The graded mixed Hodge structure $\bigoplus_{r \geq 1}\BPS_{rv}$ is
the \emph{BPS cohomology}.
\end{definition}

\begin{remark}
The cohomological integrality conjecture is known in 
Setting~\ref{set-Higgs} by \cite{kinjo2021CohomologicalChiIndependence},
Setting~\ref{set-pi_1-mod} by \cite{davison2016CohomologicalHallAlgebras}, and
Setting~\ref{set-preproj} by \cite{davison2017IntegralityConjectureCohomology}.
\end{remark}

Theorem~\ref{thm-general_e-poly} can be rephrased as a formula for the E-polynomial of BPS cohomology.
\begin{corollary}
\label{cor-e_BPS_gen}
Suppose $\CA_{\sC,v}^{\sst}$ satisfies the cohomological integrality conjecture. Then
\begin{equation*}
E(\BPS_{2v}) = \frac{E(IH_c^{\bullet}(\ModuliSpace_{2}^{\sst}))}{\ELL^{4g-3}}+
E(\Lambda^2(\Hc(\ModuliSpace_{1}^{\sst})\otimes \LL^{-g}))  .
\end{equation*}
\end{corollary}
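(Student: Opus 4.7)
The plan is to compare the degree-two graded piece of the cohomological integrality isomorphism with the formula from Theorem~\ref{thm-general_e-poly}, and extract $E(\BPS_{2v})$ by cancellation.

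First, I would extract what the cohomological integrality conjecture predicts at total class $2v$. Expanding the symmetric algebra
$\Sym(\bigoplus_{r\geq 1}\BPS_{rv}\otimes \Hc^\bullet(B\GG_m)\otimes \LL)$
and taking the degree-two summand gives
\begin{equation*}
\Hc^\bullet(\ModuliStack_{2v}^{\sst})\otimes \LL^{\vdim/2}
\cong
\bigl(\BPS_{2v}\otimes \Hc^\bullet(B\GG_m)\otimes \LL\bigr)
\oplus
\Sym^2\bigl(\BPS_v \otimes \Hc^\bullet(B\GG_m)\otimes \LL\bigr),
\end{equation*}
with $\vdim/2 = 2\chi(v) = 4-4g$ (since the 2CY Euler form at $v$ is $\chi(v)=2-2g$, as is reflected by $\dim \ModuliSpace_1^{\st}=2g$ in all four settings). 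Thus the left hand side coincides with the left hand side of Theorem~\ref{thm-general_e-poly} after applying $E$.

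Second, I would identify $\BPS_v$. Primitivity of $v$ forces rank-one semistable to coincide with stable, so $\ModuliStack_v^{\sst}$ is a trivial $\GG_m$-gerbe over the smooth $\ModuliSpace_1^{\st}$, and the rank-one component of the integrality isomorphism
\begin{equation*}
\Hc^\bullet(\ModuliSpace_1^{\st})\otimes \Hc^\bullet(B\GG_m)\otimes \LL^{1-g}
\cong \BPS_v\otimes \Hc^\bullet(B\GG_m)\otimes \LL
\end{equation*}
forces $\BPS_v = \Hc^\bullet(\ModuliSpace_1^{\st})\otimes \LL^{-g}$. Substituting into the previous decomposition, the $\Sym^2$ summand becomes $\Sym^2(\Hc^\bullet(\ModuliSpace_1^{\st})\otimes \Hc^\bullet(B\GG_m)\otimes \LL)\otimes \LL^{-2g}$, which upon taking $E$-series matches exactly the third term on the right hand side of Theorem~\ref{thm-general_e-poly}.

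Finally I would take $E$-series of everything, use Theorem~\ref{thm-general_e-poly} for $E(\CA_{\sC,v,2}^{\sst})$, cancel the common $\Sym^2$ contributions, and divide both sides by the nonzero Laurent series $E(\Hc^\bullet(B\GG_m)\otimes \LL)$, obtaining
\begin{equation*}
E(\BPS_{2v}) = \frac{E(IH_c^\bullet(\ModuliSpace_2^{\sst}))}{\ELL^{4g-3}} + \frac{E(\Lambda^2(\Hc^\bullet(\ModuliSpace_1^{\sst})))}{\ELL^{2g}}.
\end{equation*}
The second summand is rewritten using $\Lambda^2(V\otimes \LL^{-g})\cong \Lambda^2(V)\otimes \LL^{-2g}$ (valid since $\LL^{-g}$ is a one-dimensional Tate twist of even cohomological degree), yielding the claimed formula. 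There is no serious obstacle; the only points that require care are the computation of $\vdim(\ModuliStack_{2v}^{\sst})$ in each of Settings~1--4 and the verification that the Koszul-sign conventions for $\Sym^2$ and $\Lambda^2$ commute with tensoring by a pure even Tate twist, both of which are routine.
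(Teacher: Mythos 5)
Your derivation is correct and is exactly the (implicit) argument the paper intends: Corollary~\ref{cor-e_BPS_gen} is presented there as a direct rephrasing of Theorem~\ref{thm-general_e-poly}, obtained by expanding the degree-two graded piece of the integrality isomorphism, identifying $\BPS_v \cong \Hc^\bullet(\ModuliSpace_1^{\st})\otimes\LL^{-g}$ from the rank-one piece, cancelling the common $\Sym^2$ contribution, and dividing by the invertible series $E(\Hc^\bullet(B\GG_m)\otimes\LL)$. The only (cosmetic) remark is that the identification of $\BPS_v$ need only be made at the level of E-series for the statement at hand, so no cancellation of tensor factors of mixed Hodge structures is actually required.
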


Moreover, assuming purity of $\CA^{\sst}_{\sC,v}$, we can upgrade
the statement for E-polynomials to a statement for cohomology.
\begin{corollary}
\label{cor-general_coh}
Suppose $\CA^{\sst}_{\sC,v}$ satisfies the cohomological integrality conjecture.
If $\Hc^\bullet(\ModuliStack_2^{\sst})$ is pure, then there is a 
canonical isomorphism of 
mixed Hodge structures
\begin{equation*}
\begin{split}
\Hc^\bullet(\ModuliStack_{2}^{\sst}) \otimes \LL^{4-4g} 
&\cong
\IHc^\bullet(\ModuliSpace_{2}^{\sst}) \otimes \LL^{3-4g} 
\otimes \Hc^\bullet(B\GG_m)\otimes \LL \\
&\qquad
\oplus \Lambda^2(\Hc^\bullet(\ModuliSpace_{1}^{\st})) \otimes \Hc^{\bullet}(B\GG_m) \otimes \LL
\\
&\qquad
\oplus \Sym^2(\Hc^{\bullet}(\ModuliSpace_{1}^{\st})\otimes \Hc^{\bullet}(B\GG_m) \otimes \LL),
\end{split}
\end{equation*}
equivalently, there is a canonical isomorphism
\begin{equation*}
\BPS_{2v} \cong IH_c^{\bullet}(\ModuliSpace_{2}^{\sst}) \otimes \LL^{3-4g}
\oplus \Lambda^2(\Hc(\ModuliSpace_{1}^{\sst}\otimes \LL^{-g})).
\end{equation*}
\end{corollary}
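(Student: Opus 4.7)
The plan is to combine the assumed cohomological integrality isomorphism with the identification of $\BPS_v$ for primitive $v$, compare with the E-series formula of Theorem~\ref{thm-general_e-poly}, and finally upgrade to a canonical iso of mixed Hodge structures using the purity hypothesis.

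First I would extract the degree-$2v$ piece of the integrality isomorphism
$$\CA^\sst_{\sC,v} \cong \Sym\Bigl(\bigoplus_{r\geq 1} \BPS_{rv} \otimes \Hc^\bullet(B\GG_m) \otimes \LL\Bigr).$$
Only the $r=1$ and $r=2$ generators contribute in total weight $2v$, so the symmetric algebra truncates to give a canonical MHS decomposition
$$\CA^\sst_{\sC,2v} \;\cong\; \BPS_{2v} \otimes \Hc^\bullet(B\GG_m) \otimes \LL \;\oplus\; \Sym^2\bigl(\BPS_v \otimes \Hc^\bullet(B\GG_m) \otimes \LL\bigr).$$

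Next I would identify $\BPS_v$ explicitly. Primitivity of $v$ forces every semistable object of class $v$ to be stable and hence Schur, so the good moduli map $\ModuliStack_v^\sst \to \ModuliSpace_1^\sst$ is a $\GG_m$-gerbe and $\Hc^\bullet(\ModuliStack_v^\sst) \cong \Hc^\bullet(\ModuliSpace_1^\sst) \otimes \Hc^\bullet(B\GG_m)$. Matching this with the degree-$v$ part of the integrality iso and tracking the Tate twist $\LL^{\vdim(\ModuliStack_v^\sst)/2}$ (which one checks equals $\LL^{-g-1}$ in each of Settings~\ref{set-Higgs}--\ref{set-preproj}, i.e.\ $\vdim(\ModuliStack_v^\sst)/2 - 1 = -g$) gives $\BPS_v \cong \Hc^\bullet(\ModuliSpace_1^\sst) \otimes \LL^{-g}$. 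Substituting back into the degree-$2v$ decomposition and comparing with Theorem~\ref{thm-general_e-poly} isolates the $\BPS_{2v}$ summand and yields exactly the E-polynomial identity of Corollary~\ref{cor-e_BPS_gen}, namely
$$E(\BPS_{2v}) \;=\; E(\IHc^\bullet(\ModuliSpace_2^\sst))\, q^{3-4g} \;+\; E\bigl(\Lambda^2(\Hc^\bullet(\ModuliSpace_1^\sst) \otimes \LL^{-g})\bigr).$$

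Finally I would upgrade to a MHS iso via purity. The purity of $\Hc^\bullet(\ModuliStack_2^\sst)$ implies that $\CA^\sst_{\sC,2v}$ is pure, and therefore so is its direct summand $\BPS_{2v}$; on the other side, $\IHc^\bullet(\ModuliSpace_2^\sst)$ is pure as a summand of pure $\Hc^\bullet(\ModuliStack_2^\sst)$ via the BBD decomposition of $\coarsemap_{*}\mathrm{IC}_{\ModuliStack_2^\sst}$, and $\Hc^\bullet(\ModuliSpace_1^\sst)$ is pure as a summand of the pure $\CA^\sst_{\sC,v}$. Since two pure MHS with equal E-series are (at least abstractly) isomorphic, combining this with the Corollary~\ref{cor-e_BPS_gen} identity produces the desired MHS iso for $\BPS_{2v}$; reinserting via the degree-$2v$ decomposition gives the first form of the statement. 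Canonicity is realized by exhibiting specific embeddings: $\IHc^\bullet(\ModuliSpace_2^\sst) \otimes \LL^{3-4g} \into \BPS_{2v}$ from Davison's $\mathrm{IC}$-pushforward construction and $\Lambda^2(\Hc^\bullet(\ModuliSpace_1^\sst)\otimes \LL^{-g}) \into \BPS_{2v}$ coming from the antisymmetrized CoHa product $\BPS_v \otimes \BPS_v \to \BPS_{2v}$.

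The main obstacle is this last step: abstract matching of E-polynomials on pure MHS only produces a non-canonical isomorphism, so the canonical identification must be pinned down by the above maps, and one must verify that these maps are injective with complementary images. All the rest is bookkeeping of Tate twists across the four settings, cleanly organized by the common value $g$.
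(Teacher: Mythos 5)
Your overall architecture matches the paper's: extract the degree-$2v$ piece of the integrality isomorphism, compare E-series via Theorem~\ref{thm-general_e-poly}, and use purity to promote the numerical identity to an isomorphism of mixed Hodge structures. The paper's actual proof is three lines: the integrality theorem identifies $\Hc^\bullet(\ModuliStack_2)\otimes\LL^{4-4g}$ as the second graded piece of $\Sym(\bigoplus_{r\geq 1}\BPS_{rv}\otimes\Hc^\bullet(B\GG_m)\otimes\LL)$, then \cite[Theorem~6.6]{davison2021Purity2CalabiYauCategories} supplies a \emph{canonical inclusion} of the right-hand side into the left-hand side, and purity plus equality of E-polynomials forces this inclusion to be an isomorphism. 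Your closing paragraph correctly identifies this injection (IC-pushforward plus the antisymmetrized product $\BPS_v\otimes\BPS_v\to\BPS_{2v}$) as the crux, which is precisely what the citation to Davison delivers.

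There is, however, one genuinely wrong step in your write-up: the assertion that ``two pure MHS with equal E-series are (at least abstractly) isomorphic.'' This is false --- the E-series only records Hodge numbers, and non-isogenous elliptic curves already give pure weight-one Hodge structures with identical Hodge numbers that are not isomorphic over $\QQ$. The correct order of logic is the reverse of yours: first produce the canonical \emph{injective morphism} of mixed Hodge structures (from Davison's theorem), and only then invoke purity and the E-polynomial identity. The point is that for a cohomologically graded \emph{pure} Hodge structure the weight equals the cohomological degree, so every term $\dim(\Gr_F^p\Gr^W_{p+q}H^n)u^pv^q$ in the E-series carries the sign $(-1)^{p+q}$ with no cancellation between degrees; hence the cokernel of the inclusion, being pure with vanishing E-series, is zero. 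Your ``one must verify that these maps are injective with complementary images'' is exactly the gap --- injectivity is the content of the cited theorem, and complementarity is not a separate verification but the automatic consequence of purity plus the E-polynomial equality just described.
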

\begin{proof}
By the cohomological integrality theorem $\H_c^\bullet(\ModuliStack_2^{\sst}) \otimes \LL^{4-4g}$ must be the 
second graded piece of 
$\Sym(\bigoplus_{r\geq 1}\BPS_{rv} \otimes \Hc^\bullet(B\GG_m) \otimes \LL)$.
Thus by 
\cite[Theorem~6.6]{davison2021Purity2CalabiYauCategories}
there is a canonical inclusion
of the RHS into $\H_c^\bullet(\ModuliStack_2^{\sst}) \otimes \LL^{4-4g}$. Purity and equality of E-polynomials (Theorem~\ref{thm-general_e-poly}) implies that the inclusion is in
fact an isomorphism of mixed Hodge structures.
\end{proof}
\begin{example}
In Settings \ref{set-Higgs}, \ref{set-cpt2CY},
and \ref{set-preproj} the compactly supported cohomology
$\Hc^\bullet(\ModuliStack_2^{\sst})$ is pure 
(\cite{davison2017IntegralityConjectureCohomology,
davison2021Purity2CalabiYauCategories}).
However, in Setting \ref{set-pi_1-mod} 
the compactly supported cohomology 
$\Hc^\bullet(\ModuliStack_{2}^{\sst})$ is not pure.
\end{example}

\subsection{Notation}
We work over the complex numbers. 
For an algebraic group $G$ acting
on a scheme $X$ denote by $X/G$ the quotient stack.

We always take (compactly supported) cohomology and intersection cohomology with rational coefficients.
We overload the notation $\LL$ which denotes the mixed Hodge structure
$\Hc^\bullet(\AA^1)$ or the class $[\AA^1]$ in the Grothendieck ring of varieties. 
The E-polynomial $\ELL \coloneqq E(\LL)=uv$ is unambiguous.

\subsection*{Acknowledgements}
I would like to thank my advisor Ben Davison for suggesting the project and for many helpful discussions. I would like to thank Lucien Hennecart, Naoki Koseki, and Mirko Mauri for useful conversations. I am grateful to Francesca Carocci and Dimitri Wyss for giving the opportunity to give a talk at EPFL on the results of the paper and for ensuing discussions.
Many thanks are also due to the anonymous referee for a thorough and constructive review.

This research was supported by the ERC starter grant ``Categorified Donaldson–Thomas theory'' No. 759967 of the European research council.

\section{E-series}
\label{prelim}

\label{sec-E-series}

\subsection{E-series of mixed Hodge structures}
A \emph{(cohomologically graded) mixed Hodge structure} 
is a triple $H=(H^\bullet,W,F)$ consisting of 
\begin{itemize}
\item 
a $\ZZ$-graded vector space $H$ over $\QQ$
\item 
an increasing \emph{weight filtration} $W$ on the graded vector space $H$
\item 
a decreasing \emph{Hodge filtration} $F$
on the complexified graded vector space $H_\CC$
\end{itemize}
such that the filtration $F$ on the complexification of each
associated graded piece $W_{k}H/W_{k-1}H$ endows said piece with a rational weight $k$ pure Hodge structure. 

A cohomologically graded mixed Hodge structure $H$ is said to be a \emph{(cohomologically graded) pure Hodge structure} if $H^n = (H^n,W\cap H^n, F \cap H^n)$ is a (non-graded) pure Hodge structure of weight $n$ for all $n \in \ZZ$.

Henceforth all Hodge structures are assumed
to be cohomologically graded.
\\

For every cohomologically graded mixed Hodge structure $H=(H^\bullet,W,F)$ such that 
\begin{equation}\label{eq-E-series_hyp}
\dim(\Gr_{F}^p \Gr^W_{p+q}H^n) < \infty\text{, }
H^n = 0 \text{ for } n \gg 0,\text{ and }
\Gr_{p}^W H^n = 0 \text{ for } p > n
\end{equation}
we define the \emph{E-series} of $H$ to be 
\begin{equation*}
E(H) = \sum_{p,q \in \ZZ} \sum_{n \in \ZZ} (-1)^n
\dim (\Gr_{F}^{p} \Gr^W_{p+q} H^{n}) u^p v^q \in \ZZ((u^{-1},v^{-1})).
\end{equation*}
If $H$ is finite dimensional, then $E(H)$ is a Laurent-polynomial 
in $u,v$ and so we call it the \emph{E-polynomial} of $H$.

\begin{remark}
	The boundedness condition \eqref{eq-E-series_hyp} is satisfied for the compactly supported cohomology for all finite type Artin stacks with affine stabilisers \cite[Lemma~4.6]{davison2020BPSLieAlgebras}. Moreover it guarantees that the E-series is well-defined.
\end{remark}

Let $\MHSboundedabove$ be the category of 
mixed Hodge structures 
$H=(H^\bullet,W,F)$ satisfying \eqref{eq-E-series_hyp}. 
It is a symmetric monoidal abelian category. 
The E-series defines a ring homomorphism
\begin{equation*}
E\colon K_0(\MHSboundedabove) \longto \ZZ((u^{-1},v^{-1})).
\end{equation*}

\subsection{E-series of varieties}
For every finite type separated scheme $X$ over $\CC$ 
its compactly supported cohomology $\Hc^\bullet (X)$ is endowed with a mixed Hodge structure by
\cite{deligne1971TheorieHodgeII,deligne1974TheorieHodgeIII}.
The \emph{E-polynomial} of a finite type separated scheme $X$ 
is the E-polynomial of its compactly supported cohomology 
$E(X) = E(\Hc^\bullet (X)).$
The E-polynomial is a motivic invariant: for every closed subscheme $Z \subset X$ we have $E(X) = E(Z) + E(X \setminus Z)$ and $E(X \times Y) = E(X)E(Y)$.
\begin{example}
$\ELL \coloneqq E(\LL) = uv$, $E(\GG_m)= \ELL-1$, $E(\PP^1)=\ELL+1$, $E(\GL_2)=\ELL(\ELL+1)(\ELL-1)^2$, and $E(\GL_n) = \prod_{i=0}^{n-1} (\ELL^n-\ELL^{i})$. 
\end{example}

\subsection{E-series of stacks}
There are two equivalent approaches to defining 
the E-series of a quotient stack. 
The first uses the Grothendieck ring of stacks.
The second uses an algebro-geometric approximation of the Borel construction
to define a mixed Hodge 
structure on the compactly supported cohomology of a quotient stack, 
see \cite[Section~2.3]{davison2021Purity2CalabiYauCategories} for details.
We take the approach 
via the Grothendieck ring of stacks.

Let $K_0(\Varieties)$ and $K_0(\Stacks)$, denote the
Grothendieck ring of varieties and
finite type Artin stacks with affine stabilizers, respectively.
See
\cite{ekedahl2009GrothendieckGroupAlgebraic,
fedorov2018MotivicClassesModuli,
garcia-prada2014MotivesModuliChains}
for introductions to these rings.
Let $\LL = [\AA^{1}]$ be the class of the affine line.  
\begin{proposition}
[{\cite[Theorem~1.2]{ekedahl2009GrothendieckGroupAlgebraic}}]
\label{prop-K0_var_stacks_comp_ft}
The classes $\LL=[\GG_a]$, $[\GL_n]$ in $K_0(\Stacks)$ are invertible with inverses $[B\GG_a]$, resp., $[B \GL_n]$,
and the natural inclusion
\begin{equation*}
\FI \colon{K_0}(\Varieties)[\LL^{-1},[\GL_n]^{-1}] \longinto K_0(\Stacks)
\end{equation*}
is an isomorphism of rings. 
\end{proposition}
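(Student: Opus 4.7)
The plan is to address invertibility first, then surjectivity of $\FI$, and finally injectivity, which I expect to be the main obstacle.

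For invertibility, the key input is that $\GG_a$ and $\GL_n$ are \emph{special} algebraic groups: every torsor for them is Zariski-locally trivial. Consequently, for any $\GL_n$-torsor $P \to \FX$ of finite type Artin stacks, multiplicativity of $[\cdot]$ over Zariski-locally trivial fibrations gives $[P] = [\FX] \cdot [\GL_n]$ in $K_0(\Stacks)$. Applying this to the universal torsor $\point \to B\GL_n$ yields $1 = [\GL_n] \cdot [B\GL_n]$, so $[\GL_n]$ is invertible with inverse $[B\GL_n]$; the analogous argument for $\GG_a$ gives $\LL^{-1} = [B\GG_a]$. By the universal property of localization, the tautological map $K_0(\Varieties) \to K_0(\Stacks)$ then factors through $\FI$.

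For surjectivity, I would invoke Kresch's structure theorem: every finite type Artin stack with affine stabilizers admits a stratification by finitely many locally closed substacks, each isomorphic to a quotient stack $[X_i/\GL_{n_i}]$ with $X_i$ a quasi-projective variety. Iterating the scissor relation $[\FX] = [\FU] + [\FX \setminus \FU]$ across the stratification, together with $[X_i/\GL_{n_i}] = [X_i] \cdot [\GL_{n_i}]^{-1}$ (consequence of the invertibility step applied to the $\GL_{n_i}$-torsor $X_i \to [X_i/\GL_{n_i}]$), writes $[\FX]$ as an element visibly in the image of $\FI$.

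The hard step is injectivity. The strategy is to construct an explicit inverse $\Phi\colon K_0(\Stacks) \to K_0(\Varieties)[\LL^{-1},[\GL_n]^{-1}]$ by sending $[\FX]$ to $\sum_i [X_i]\cdot[\GL_{n_i}]^{-1}$ for a chosen stratification, and proving this is independent of all choices. I would carry this out in two stages. First, to reconcile different group presentations, I use the identity $[\GL_N] = [G] \cdot [\GL_N/G]$ for a closed embedding $G \subset \GL_N$ (which holds because $\GL_N \to \GL_N/G$ is a Zariski-locally trivial $G$-torsor), allowing me to rewrite $[X]/[G] = [X \times^G \GL_N] \cdot [\GL_N]^{-1}$ and hence normalise every stratum to a $\GL_N$-quotient for a single sufficiently large $N$. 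Second, to reconcile different stratifications of the same quasi-projective variety, I appeal to Bittner's presentation of $K_0(\Varieties)$ by smooth projective varieties modulo smooth blow-up relations, together with weak factorisation of birational maps between smooth projective models; any two stratifications become comparable after resolving and applying a finite chain of blow-ups with smooth centres, and the Bittner relations show that the resulting class in $K_0(\Varieties)[\LL^{-1},[\GL_n]^{-1}]$ is unchanged. By construction $\Phi \circ \FI = \id$, and the surjectivity step then forces $\FI \circ \Phi = \id$. The principal obstacle is the second stage: controlling arbitrary stratifications simultaneously genuinely requires resolution of singularities and weak factorisation, and is the place where Ekedahl's original proof does its real work.
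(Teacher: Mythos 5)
The paper does not prove this proposition at all: it is quoted verbatim, with a citation, as Theorem~1.2 of Ekedahl's paper, so there is no internal proof to compare against and I can only assess your argument on its own terms. Your architecture (invertibility, surjectivity via Kresch's stratification, injectivity via an explicit inverse) is the right one, but two of the three steps are flawed as written.

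First, the invertibility step. Speciality of $\GL_n$ says that $\GL_n$-torsors over \emph{schemes} are Zariski-locally trivial; the universal torsor $\point \to B\GL_n$ is a torsor over a stack whose only nonempty Zariski open is itself, and it is certainly not Zariski-locally trivial. The identity $[\point]=[\GL_n]\cdot[B\GL_n]$ must instead be extracted from the vector-bundle relation $[E]=\LL^{r}[\FX]$ that is \emph{imposed} in the definition of $K_0(\Stacks)$ precisely because it does not follow from scissor relations for stacks: one realizes a $\GL_n$-torsor $P\to\FX$ as the frame bundle of the associated rank-$n$ bundle $V$, stratifies $V^{\oplus n}$ by rank to get $[P]=[\GL_n][\FX]$, and then specializes to $\FX=B\GL_n$. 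The same caveat applies to $[B\GG_a]=\LL^{-1}$. (Your auxiliary identity $[\GL_N]=[G][\GL_N/G]$ is also false for non-special $G$ --- take $G=\mu_2\subset\GG_m$, where $2(\LL-1)\neq\LL-1$ --- though you only apply it with $G$ a general linear group, where it is fine.)

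Second, and more seriously, the injectivity step is misdirected. Independence of the stratification is not where the difficulty lies and needs neither Bittner's presentation, nor weak factorization, nor resolution of singularities: any two stratifications of a variety by locally closed subvarieties admit a common refinement, and $[X]=\sum_i[X_i]$ for any finite such stratification follows by Noetherian induction from the defining scissor relations of $K_0(\Varieties)$, so there is nothing further to reconcile. (Routing the argument through weak factorization would moreover confine the theorem to characteristic zero, whereas it holds over any field.) The genuine content of well-definedness is presentation-independence: if $[X/\GL_n]\cong[Y/\GL_m]=\FX$, then $X\times_{\FX}Y\to X$ is a $\GL_m$-torsor over the \emph{scheme} $X$, hence Zariski-locally trivial by speciality, giving $[X][\GL_m]=[X\times_{\FX}Y]=[Y][\GL_n]$; combined with checking that the proposed inverse annihilates the scissor and vector-bundle relations defining $K_0(\Stacks)$, this completes the proof. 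Your sketch gestures at the relevant group-change identity but then sends the hard part through machinery that neither applies cleanly nor is required.
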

We use this isomorphism to define the E-series of 
a finite type stack with affine stabilizers.
\begin{definition}
\label{def-E-series_stack}
By the K\"unneth formula and 
the long exact sequence in compactly supported cohomology,
the E-polynomial of a variety defines a ring homomorphism
\begin{equation*}
E\colon K_0(\Varieties) \longto \ZZ[u,v],
\end{equation*}
which induces the
\emph{E-series} homomorphism 
\begin{equation*}
E= E\circ \FI^{-1} \colon K_0(\Stacks) \longto
\ZZ[u,v][(uv)^{-1},E(\GL_n)^{-1}] \subset \ZZ((u^{-1},v^{-1})),
\end{equation*} 
where we overload $E$ in our notation. 
\end{definition}

\begin{example}
\label{ex-E_quotbyfin}
In general $E(X/G) \neq E(X)/E(G)$.
Let $G$ be a finite group acting on a smooth variety $X$.
Then $E(X/G) =E(\Hc^\bullet (X)^G)$. %
\end{example}

\begin{example}
An algebraic group $G$ is \emph{special} if every $G$-torsor is 
Zariski-locally trivial.
Let $G$ be a special algebraic group acting on a variety $X$.
The E-series of the quotient stack $X/G$ is 
$
E(X/G) = E(X)/E(G).
$
\end{example}
\begin{example}
	Suppose $G$ is a special algebraic group and $\FX \to X$ is a $G$-gerbe.
	Then $E(\FX) = E(X)/E(G)$
\end{example}

\subsection[E-series of a symmetric square via lambda-rings]{E-series of a symmetric square via \texorpdfstring{$\lambda$}{lambda}-rings}
For our computations in the sequel it is convenient to use the language of 
$\lambda$-rings. 
We refer to \cite{knutson1973RingsRepresentationTheory} 
for the basics of $\lambda$-rings.

The ring $R=\ZZ((u^{-1},v^{-1}))$ carries 
two natural $\lambda$-ring structures
coming from the isomorphism $R\cong K_0(\mathbf{Vect}_{\ZZ^2}^{-})$ 
where $\mathbf{Vect}_{\ZZ^2}^{-}$ is the symmetric monoidal abelian category 
of $\ZZ^2$-graded, bounded above vector spaces
with finite dimensional graded pieces. 
The first is the $\lambda$-ring structure $\lambda(t) = \sum_{n \geq 0} \lambda^n t^n$
induced by taking alternating powers $\Lambda^n V$ 
for  $V \in \Ob(\mathbf{Vect}_{\ZZ^2}^{-})$.
The second is the \emph{symmetric power} $\lambda$-ring structure 
$\sigma(t) = \sum_{i\geq 0}\sigma^n t^n$ 
induced by taking symmetric powers $\Sym^{n}V$ 
for $V \in \Ob(\mathbf{Vect}_{\ZZ^2}^{-})$.
The $\lambda$-ring structures $\lambda$ and $\sigma$ in $R$ are opposite.

In the sequel we only require $\sigma^2$ and $\lambda^2$,
which are explicitly given for all $f\in R = \ZZ((u^{-1},v^{-1}))$ by
\begin{equation*}
\sigma^2(f) = \frac{1}{2}(f(u,v)^2 + f(u^2,v^2)) \quad \text{ and } \quad
\lambda^2(f) = \frac{1}{2}(f(u,v)^2 - f(u^2,v^2)).
\end{equation*}

The ring $K_0(\Stacks)$ admits 
a pre-$\lambda$-ring structure 
$\MSym[\FX](t) = \sum_{i\geq 0} \MSym^n[\FX]t^i$
given by the (stacky) symmetric powers
$
\MSym^i[\FX] = [\MSym^i(\FX)].
$
The opposite pre-$\lambda$-ring structure
$\MAlt[\FX](t) = \sum_{i \geq 0} \MAlt^n[\FX]t^n$
is given by 
$
\MAlt[\FX](t) = (\MSym(\FX)(-t))^{-1}
$.
We think of $\MAlt^n[\FX]$ as the class of 
the $n$th alternating power of $[\FX]$.
Indeed, for all varieties $X$ we have the equality 
\begin{equation*}
E(\MAlt^n[X]) = E(\Lambda^n\Hc(X)).
\end{equation*}
The E-series ring homomorphism of
Definition~\ref{def-E-series_stack} is a homomorphism of
pre-$\lambda$-rings,
because $\FI$ is an isomorphism of pre-$\lambda$-rings
(see Example~3.5 and Proposition 3.6 in 
\cite{davison2011motivic}). 

With the formalism of $\lambda$-rings we easily find a concise
expression for the E-series of the symmetric square of a quotient by $\GG_m$.
\begin{lemma}
\label{lm-epoly_quot_stacks}
Let $\GG_m$ act on a separated scheme of finite type $X$. Then 
\begin{equation*}
E(\Sym^2(X/\GG_m)) = 
\frac{\ELL E(\MSym^2(X))+E(\MAlt^2[X])}{(\EGm)^2(\EPP)}.
\end{equation*}
\end{lemma}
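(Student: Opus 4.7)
The plan is to compute $E(\Sym^2(X/\GG_m))$ by lifting to the $\lambda$-ring $R = \ZZ((u^{-1},v^{-1}))$ and exploiting the two explicit pre-$\lambda$-structures $\sigma$ and $\lambda$ described in the subsection.

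First, because $\GG_m$ is special, the morphism $X \to X/\GG_m$ is a Zariski-locally trivial $\GG_m$-torsor, giving the factorisation $[X/\GG_m] = [X] \cdot [B\GG_m]$ in $K_0(\Stacks)$ and hence $E(X/\GG_m) = E(X)/(\ELL-1)$. Since $E$ is a homomorphism of pre-$\lambda$-rings (with respect to $\Sym$ on the left and $\sigma$ on the right), this rewrites as
\begin{equation*}
E(\Sym^2(X/\GG_m)) = \sigma^2\!\left(E(X) \cdot \tfrac{1}{\ELL-1}\right).
\end{equation*}

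Second, a direct expansion from the explicit formulas for $\sigma^2$ and $\lambda^2$ (using that $f(u,v) \mapsto f(u^2,v^2)$ is a ring endomorphism of $R$) yields the universal product identity
\begin{equation*}
\sigma^2(ab) = \sigma^2(a)\sigma^2(b) + \lambda^2(a)\lambda^2(b) \qquad \text{in } R.
\end{equation*}
Applying this identity with $a = E(X)$ and $b = 1/(\ELL-1)$, together with $\sigma^2(E(X)) = E(\Sym^2 X)$ and $\lambda^2(E(X)) = E(\Lambda^2 X)$, reduces the problem to computing $\sigma^2$ and $\lambda^2$ of $1/(\ELL-1)$.

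Third, the explicit formulas applied to $f(u,v) = 1/(uv-1)$ give, after a short simplification,
\begin{equation*}
\sigma^2\!\left(\tfrac{1}{\ELL-1}\right) = \frac{\ELL}{(\ELL-1)^2(\ELL+1)}, \qquad \lambda^2\!\left(\tfrac{1}{\ELL-1}\right) = \frac{1}{(\ELL-1)^2(\ELL+1)}.
\end{equation*}
Substituting back yields the stated formula. The entire argument is bookkeeping in $R$; the only potential obstacle, verifying the product identity for $\sigma^2$, is immediate from the explicit formulas and so presents no real difficulty.
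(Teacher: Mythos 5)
Your proof is correct and takes essentially the same route as the paper: both reduce to $\sigma^2\bigl(E(X)/(\ELL-1)\bigr)$ using that $\GG_m$ is special and then expand via the explicit formula $\sigma^2(f)=\tfrac12\bigl(f(u,v)^2+f(u^2,v^2)\bigr)$. Your packaging of the final rearrangement through the identity $\sigma^2(ab)=\sigma^2(a)\sigma^2(b)+\lambda^2(a)\lambda^2(b)$ is a tidy but inessential variation on the paper's direct computation.
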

\begin{proof}
\begin{gather*}
E(\Sym^2(X/\GG_m)) = \sigma^2(E(X/\GG_m)) 
= \frac{1}{2}\left(E(X/\GG_m)(u,v)^2 + E(X/\GG_m)(u^2,v^2)\right)
\\
= \frac{1}{2}\left(\frac{E(X)^2}{(\EGm)^2} 
+ \frac{E(X)(u^2,v^2)}{(\ELL^2-1)}\right) 
= \frac{\ELL\sigma^2(E(X))+\lambda^2(E(X))}{(\EGm)^2(\ELL+1)} 
\end{gather*}
\end{proof}
\begin{remark}
Upon reading the calculation in Section~\ref{sec-stack_e-poly}, the reader might notice that 
most of the steps are valid more generally in the Grothendieck ring of stacks. 
The main reason we do not work in $K_0(\Stacks)$ is to have access to 
the identity of Lemma~\ref{lm-epoly_quot_stacks}.
\end{remark}

\section{The computation}
\label{sec-stack_e-poly}
We continue to use the terminology and notation from Section~\ref{sec-BPS}. 
In this section we compute the E-series of certain moduli stacks $\ModuliStack^{\sst}_{2v}$ of semistable objects in certain 2CY-categories $\sC$. 
First we spell out the assumptions we make on the category $\sC$, the class $v$, semistability, and the moduli stacks and spaces, all of which are satisfied by each of our Settings~1-4.

We consider the moduli stack 
$\ModuliStack^{\st}_{1} = \ModuliStack_{v}^{\st}$ 
of semistable objects $L \in \sC$ of a primitive class $v$.
In all of our cases $\ModuliStack_{1}^{\st}$ is	 smooth 
and admits a smooth good moduli space 
$p_1\colon \ModuliStack_1^{\st} \to \ModuliSpace_1^{\st}$.
Since $v$ is primitive,
an object $L \in \sC$ of class $v$ is necessarily simple.
\begin{definition}
	For convenience 
	we call simple and semistable objects \emph{stable}. (This is why we write $\ModuliStack_{1}^{\st}$ and $\ModuliSpace_{1}^{\st}$ instead of $\ModuliStack_{1}^{\sst}$ and $\ModuliSpace_1^{\sst}$.)
	For a primitive class $v$, direct sums of stable objects all of which are of a class which is a multiple of $v$ are called \emph{polystable} of slope $v$.
\end{definition}

We also consider the moduli stack $\ModuliStack_{2}^{\sst} = \ModuliStack_{2v}^{\sst}$ of semistable objects of class $2v$.
The moduli space $\ModuliSpace_{2}^{\sst}$ is singular and parametrizes objects of class $2v$ which are both semistable and polystable. 
Semistable objects $E$ of class $2v$ are either stable or there is a stable subobject $K\subsetneq E$ of class $v$. 
In the second case, the isomorphism class of the direct sum $K \oplus E/K$ does not depend on the choice of the subobject $K$ of class $v$.
The morphism $p_2\colon \ModuliStack_{2}^{\sst} \to \ModuliSpace_{2}^{\sst}$ sends an object $E \in \ModuliStack_{2}^{\sst}$ to the factors of the filtration by stable objects of class $v$ or $2v$
\begin{equation*}
p_2(E) = \begin{cases}
E & \text{if } E \text{ is stable} \\
K \oplus E/K & \text{if } K \subsetneq E \text{ is of class }v
\end{cases}.
\end{equation*}

We assume that 
for every non-zero object $E\in \sC$ the first self-Ext-group does not vanish $\Ext^1_{\sC}(E,E) \neq 0$.
The 2CY-pairing \eqref{eq-2CY} restricts to
a non-degenerate alternating pairing on $\Ext^1_{\sC}(E,E)$ and 
so $\Ext^1_{\sC}(E,E)$ is even-dimensional. 
For all objects $L \in \sC$ of class $v$, let $g>1$ be the integer such that $\dim \Ext^{1}_{\sC}(L,L) = 2g$.
This implies that for all non-isomorphic objects $Q,K \in \sC$ of class $v$ we have 
$\dim \Ext_{\sC}^{1} (Q,K) = 2g-2$. 

\begin{remark}
[Ext-quivers]
\label{rm-ext-quivers}
All semistable objects of class $2v$ have one of the following three 
Ext-quivers.
    \begin{equation*}
	\begin{tikzcd}[column sep = large]
		\circled{1}  \ar[loop, distance= 3em,in = 120, out=60,"8g-6 \text{ loops}"'] & &
		\circled{1} \ar[loop,in = 210, out=150,distance=3em,"2g \text{ loops}"'] \ar[r,bend left,"2g-2 \text{ arrows}"]& \circled{1}
		\ar[l,bend left, "2g-2 \text{ arrows}"] \ar[loop,distance= 3em,in = 30, out=330,"2g \text{ loops}"'] & &
		\circled{2} \ar[loop, distance= 3em,in = 120, out=60,"2g \text{ loops}"']
	\end{tikzcd}
\end{equation*}

The numbers inside the vertices represent the corresponding dimension
vector determined by the object. All of the objects of class $v$
have the following Ext-quiver.
\begin{equation*}
	\begin{tikzcd}
		\circled 1  \ar[loop, distance= 3em,in = 120, out=60,"2g \text{ loops}"']
	\end{tikzcd}
\end{equation*}

\end{remark}

\begin{remark}
	[Isosingularity of the moduli problems]
	\label{rm-isosingularity}
	Each of the Settings~1-4 appear as examples in \cite[§7]{davison2021Purity2CalabiYauCategories}. 
	Thus by the \'etale Ext-quiver neighborhood theorem  \cite[Theorem~5.11]{davison2021Purity2CalabiYauCategories} and Remark~\ref{rm-ext-quivers}
we have that the morphisms
$\ModuliStack_2^{\sst} \to \ModuliSpace_2^{\sst}$ for each of the Settings~1-4 (and
$\ModuliStack_1^{\st} \to \ModuliSpace_1^{\st}$)
are pairwise \'etale locally isomorphic. Thus it suffices to check local properties for all settings by checking it for Setting~\ref{set-preproj}. This implies that the moduli spaces $\ModuliSpace_2$ (and $\ModuliSpace_1$) are pairwise stably isosingular for each of the Settings~1-4. See \cite[§2.4]{mauri2021IntersectionCohomologyRank} for a definition and further discussion of stable isosingularity.
\end{remark}

\subsection{The stratification}
The standard strategy to compute the motivic invariant of a space, such as the E-series of a stack, is to stratify the space into locally-closed pieces for which the E-series is known or easy to determine
and then add everything up by the cut-and-paste relation.
The calculation below is an execution of this strategy for $\ModuliStack_2^{\sst}$.

\subsubsection{The stratification of the good moduli space}

The points of the good moduli space
$\ModuliSpace^\sst_{2}$ correspond to
polystable objects of class $v$. 
We stratify $\ModuliSpace_2^{\sst}$ by polystability type.

First we distinguish between 
stable and strictly polystable objects of class $2v$.
Let $\ModuliSpace^\st_{2} \subset \ModuliSpace^\sst_{2}$ 
be the locus of stable objects of class $2v$.
Its complement 
$\Sigma = \ModuliSpace^{\sst}_{2} \setminus \ModuliSpace^{\st}_{2}$
is the locus of strictly polystable objects
\begin{equation*}
\Sigma = \{ L_1 \oplus L_2 
\mid L_1,L_2 %
\text{ objects of class $v$} 
\}.
\end{equation*} 
More precisely $\Sigma$ is the image of the direct sum map 
\begin{equation*}
	\oplus\colon \ModuliSpace^{\st}_1 \times \ModuliSpace^{\st}_1 \longto \ModuliSpace^{\st}_{2}
\end{equation*}
which is isomorphic to the symmetric square of
$\ModuliSpace_{1}^{\st}$
\begin{equation}
	\label{eq-iso_sigma-sym2}
	\Sigma \cong \Sym^2(\ModuliSpace_{1}^{\st}).
\end{equation}
Indeed, the direct sum map is a quasi-finite map onto a normal target (which follows from the normality in the case of preprojective algebras \cite{crawley-boevey2003normality} and Remark~\ref{rm-isosingularity}).
Thus by Zariski's Main Theorem the induced map $\Sym^2(\ModuliSpace^{\st}_1) \to \ModuliSpace^{\sst}_2$ is
is an isomorphism onto its image.
Similarly, by \cite[Theorem~3.2]{lebruyn2002noncommutativesmoothnesscoadjoint}, Remark \ref{rm-ext-quivers}, and Remark \ref{rm-isosingularity}, we deduce that 
$\Sigma$ is precisely the singular locus of
$\ModuliSpace^\sst_{2}$.

By the isomorphism \eqref{eq-iso_sigma-sym2}
the singular locus $\Omega$ of $\Sigma$ is identified with
the image of $\ModuliSpace_{1}^{\st}$ by the diagonal embedding  $\ModuliSpace_{1}^{\st} \into \Sym^2(\ModuliSpace_{1}^{\st})$.
Explicitly $\Omega$ is given by the locus of polystable  objects 
that are direct sums of two copies of the same object of class $v$
\begin{equation*}
\Omega = \{L^{\oplus 2} \mid
L\text{ object of class }v\} \subset \Sigma.
\end{equation*}

These loci yield the stratification by polystability type of
$\ModuliSpace^\sst_{2}$
\begin{equation}
\label{eq-crs_strat}
\ModuliSpace^\sst_{2} = \ModuliSpace^\st_{2} 
\cup (\Sigma \setminus \Omega ) \cup \Omega,
\end{equation}
where the stratum $\Sigma \setminus \Omega$ has the explicit description
\begin{equation*}
\Sigma \setminus \Omega = \{ L_1 \oplus L_2 
\mid L_1,L_2 \text{ distinct objects of class }v\}.
\end{equation*}

This stratification has already appeared in the literature 
and is applied in the works
\cite{felisetti2021IntersectionCohomologyModuli}
\cite{mauri2021IntersectionCohomologyRank} 
to compute the intersection E-polynomials of $\ModuliSpace^\sst_{2}$.

\begin{remark}
The three loci $\ModuliSpace_{2}^{\st}$, $\Sigma\setminus \Omega$, and $\Omega$ correspond, in order, to the first three Ext-quivers in Remark~\ref{rm-ext-quivers}. We emphasize that the deepest stratum $\Omega$ corresponds to the $g$-loop quiver with dimension vector 2.
\end{remark}

\subsubsection{Pulling back the stratification} A natural stratification of the stack $\ModuliStack^{\sst}_{2}$ is 
the pull-back of the stratification \eqref{eq-crs_strat} along the morphism 
$\coarsemap=\coarsemap_{2}\colon 
\ModuliStack^\sst_{2} \to \ModuliSpace^\sst_{2}$
to the good moduli space.
Write
\begin{equation*}
\SingStack = \coarsemap^{-1}(\SingSpace),
\DiagStack = \coarsemap^{-1}(\DiagSpace),
\text{ and }
\OffDiagStack = \coarsemap^{-1}(\SingSpace \setminus \DiagSpace) 
= \SingStack \setminus \DiagStack.
\end{equation*}
The stack $\SingStack$ is the singular locus of $\ModuliStack_{2}^\sst$.
We call $\OffDiagStack$ the \emph{off-diagonal locus} 
and $\DiagStack$ the \emph{diagonal locus}.
Additionally, we have the stable locus
$\ModuliStack^{\st}_{2} = p^{-1}(\ModuliSpace^\st_{2}) 
$, which is a $\GG_m$-gerbe over $\ModuliSpace^{\st}_2$.

The pullback stratification is 
\begin{equation*}
\ModuliStack^{\sst}_{2} = \ModuliStack^{\st}_{2} \cup
\OffDiagStack \cup \DiagStack.
\end{equation*}

The E-series of the stable loci $\ModuliStack^{\st}_1$ and $\ModuliStack^{\st}_{2}$ of the stacks 
is calculated  from the E-polynomial of of the stable loci of the $\ModuliSpace^{\st}_{1}$ and $\ModuliSpace^\st_{2}$ of the good moduli spaces.
\begin{lemma}
We have 
\begin{equation*}
		E(\ModuliStack^\st) = E(\ModuliSpace^\st/\GG_m) 
	= E(\ModuliSpace^\st)/(\EGm) 
\end{equation*}
\end{lemma}
\begin{proof}
	We apply \cite[Lemma~3]{heinloth2012cohomology} to the $\GG_m$-gerbe
	$\ModuliStack^{\st} \to \ModuliSpace^{\st}$.
	Thus it suffices to construct a vector bundle on $\ModuliStack^{\st}$ of $\GG_m$-weight 1.

	In Settings 2 and 4, we take the tautological bundle $\CE$ on $\ModuliStack^{\st}$ which records the underlying vector space of the representation. The $\GG_m$-weight of $\CE$ is given by the weight of the scaling action which is equal to 1.

	For Settings 1 and 3 see
	\cite[Proposition~4.6.2]{huybrechts2010geometrymodulispaces} and its proof, which applies to Setting~\ref{set-Higgs} by the BNR correspondence \cite{beauville1989spectralcurvesgeneralized}.
\end{proof}

Therefore, to compute 
$E(\ModuliStack_{2}^\sst)$
it remains to compute the E-polynomials of the strata
$\OffDiagStack$ and $\DiagStack$.

\subsubsection{Stratification of the strictly semistable locus}

For every strictly semistable object $E$ of class $2v$ 
there exists stable objects $K$ and $Q$ of class $v$ and a short exact sequence
\begin{equation*}
\begin{tikzcd}
W(E)\colon &
0 \ar[r] &
K \ar[r] &
E \ar[r] &
Q \ar[r] &
0
\end{tikzcd}
\end{equation*}
that witnesses the strict semistability of  $E$. We call $W(E)$ 
the \emph{semistabilizing} short exact sequence.

There are at most two isomorphism classes of 
objects of class $v$ 
that can appear as the subobject or quotient 
in a semistabilizing short exact sequence. 

If $W(E)$ is non-split,
then the semistabilizing short exact sequence $W(E)$ is unique 
up to (non-unique) isomorphism.
On the other hand, if $W(E)$ is split, that is, if $E$ is polystable,
then all semistablizing short exact sequences are split.

Altogether, using the short exact sequences $W(E)$ we can 
distinguish four types of strictly semistables
\begin{itemize}
\item $W(E)$ is non-split, $K \ncong Q$
\item $W(E)$ non-split, $K \cong Q$
\item $W(E) $ split, $K \ncong Q$
\item $W(E)$ split, $K \cong Q$
\end{itemize}
We stratify the moduli stack $\ModuliStack^\sst_2$ according to these four cases.

Let $\SingSplit \subset \SingStack$ be the image of the direct-sum morphism
\begin{equation*}
\begin{split}
s\colon \ModuliStack_1^\st \times \ModuliStack_1^\st &\longto \SingStack\\
(K,Q) &\longmapsto K \oplus Q.
\end{split}
\end{equation*}
This locus parametrizes those strictly semistables
admitting a split semistabilizing short exact sequence. Denote the complement of $\SingSplit$ in $\SingStack$
by $\SingNonSplit$.
The stack $\SingNonSplit$ parameterizes 
those strictly semistable objects
with non-split semistabilizing short exact sequence. 

These are two new loci that we intersect with the strata 
$\OffDiagStack$ and $\DiagStack$ to obtain our final stratification.
Set
\begin{equation*}
\begin{tikzcd}[row sep = tiny]
\OffDiagNonSplit = \OffDiagStack \cap \SingNonSplit, 
& \DiagNonSplit = \DiagStack \cap \SingNonSplit,\\
\OffDiagSplit = \OffDiagStack \cap \SingSplit,
&\DiagSplit = \DiagStack \cap \SingSplit.
\end{tikzcd}
\end{equation*}
This defines the stratification 
\begin{equation*}%
\ModuliStack_{2}^\sst = \ModuliStack_{2}^\st \sqcup \OffDiagNonSplit 
\sqcup \OffDiagSplit \sqcup \DiagNonSplit \sqcup \DiagSplit
\end{equation*}
that we ultimately use to compute the E-series of $\ModuliStack_{2}$.

\subsection{The stack of strictly semistables and the stack of short exact sequences}
Let $\ExactStack$ be the stack of short exact sequences
\begin{equation*}
\begin{tikzcd}
0 \ar[r]&
K \ar[r]&
E \ar[r]&
Q \ar[r]&
0
\end{tikzcd}
\end{equation*}
where $K,Q$ are stable objects of class $v$.
The following convolution diagram relates $\ExactStack$ to the
moduli stacks $\ModuliStack_1^\st$ and $\ModuliStack_2^{\sst}$.
\begin{equation*}
\begin{tikzcd}
&\ExactStack \ar[dl,"\epsilon"'] \ar[dr,"\pi"]& &\\
\ModuliStack_{1}^{\st} \times \ModuliStack_{1}^{\st}& & \SingStack \ar[r,hook]& \ModuliStack_{2}^\sst 
\end{tikzcd}
\end{equation*}
In the diagram the morphism 
$\epsilon\colon \ExactStack \to \ModuliStack_{1}^{\st}$
map a short exact sequence to the pair consisting of the quotient object and the subobject
\begin{equation*}
\epsilon(K\into E\onto Q) = (Q,K).
\end{equation*}
The morphism 
$\pi\colon \ExactStack \to \ModuliStack_{2}^\sst$
maps a short exact sequence  to the middle term
\begin{equation*}
\pi(K \into E \onto Q) = E.
\end{equation*}
An extension of two objects of class $v$ is
necessarily strictly semistable,
thus 
the morphism $\pi$ indeed factors through $\SingStack$.

Let
$\ExactSplit \subset \ExactStack$  be
the closed substack of split short exact sequences 
and let $\ExactNonSplit \subset \ExactStack$ be its complement, which
is the open substack of 
non-split short exact sequences. 
To compute the E-series of $\ExactSplit$ and $\ExactNonSplit$ we identify $\ExactStack$ as a Picard stack over 
$(\ModuliStack_{1}^\st)^{\times 2}$ and $\ExactSplit$ as its zero-section.

\subsubsection*{Aside on Picard stacks}
\label{subsec-picard_stacks}
For the convenience of the reader we recall the 
the definition and computation of some E-series of associated to a Picard stack.
For more details see
\cite[Expos\'e XVIII, Section~1.4]{SGA4}
and \cite[(14.4),(14.5)]{laumon2000ChampsAlgebriques}.

Let $\FB$ be a finite type Artin stack with affine stabilizers 
and let $\mathcal{F}$ be a coherent sheaf on $\mathfrak{B}$. 
Recall that the \emph{total space of $\mathcal{F}$} is the
relative spectrum of the symmetric algebra $\Sym_{\FB}(\mathcal{F}^{\vee})$, 
\begin{equation*}
\Tot_{\FB}(\mathcal{F}) =
\relSpec_{\FB}(\Sym_{\FB}(\mathcal{F}^\vee)) \longto \FB.
\end{equation*}

Let
$\mathcal{F}^{\bullet}=\mathcal{F}^{-1} \stackrel{d}{\to}\mathcal{F}^{0}$
be a two-term complex of coherent sheaves on $\FB$. 
Define the \emph{Picard stack} $\Tot_{\FB}(\mathcal{F}^{\bullet})$ 
associated to the two-term complex $\mathcal{F}^{\bullet}$ explicitly as 
follows. 
For every morphism $u\colon U \to \FB$ from an affine scheme $U$ we define
\begin{equation*}
\Tot_{\FB}(\mathcal{F}^\bullet)(U) = 
\begin{Bmatrix}
\text{objects}= H^{0}(U,u^\ast \mathcal{F}^{\bullet})(U) 
\\
\text{morphisms} = 
 H^{-1}(U,u^\ast\mathcal{F}^{\bullet})(U) 
\end{Bmatrix}.
\end{equation*}
By interpreting $d\colon \mathcal{F}^{-1} \to \mathcal{F}^{0}$ as an action
of the group stack $\Tot_{\FB}(\mathcal{F}^{-1})$ on the stack 
$\Tot_{\FB}(\mathcal{F}^{0})$ we have the description
\begin{equation*}
\Tot_{\FB}(\mathcal{F}^{\bullet}) =
\Tot_{\FB}(\mathcal{F}^{0})/\Tot_{\FB}(\mathcal{F}^{-1}).
\end{equation*}
A chain map $\CF^{\bullet} \to \CG^{\bullet}$ of two-term complexes of coherent sheaves which is a quasi-isomorphism induces an isomorphism of Picard stacks
$\Tot_{\FB}(\CF^{\bullet}) \isoto \Tot_{\FB}(\CG)^{\bullet}$.

In general the Picard stack $\Tot_{\FB}(\mathcal{F}^\bullet)$
need not be an Artin stack.
However, if $\mathcal{F}^{-1}$ is locally free, then $\Tot_{\FB}(\CF^\bullet)$ is an Artin stack with affine stabilizers 
(\cite[page 143]{laumon2000ChampsAlgebriques}).

The \emph{zero-section} of the Picard stack $\Tot_{\FB}(\CF^{\bullet})$
is the closed immersion of Picard stacks 
\begin{equation*}
\Tot_{\FB}(\ker(d)[1])=
\Tot_{\FB}(\tau^{\leq -1}\CF^{\bullet}) \longinto
\Tot_{\FB}(\CF^\bullet).
\end{equation*}
where $\tau^{\leq i}$ denotes the standard truncation

\begin{lemma}
	\label{lm-motive_zero_sec_picard_perfect}
	Suppose $\CF^{\bullet} = \CF^{-1} \to \CF^{0}$ is a two-term complex of coherent sheaves which is quasi-isomorphic to a complex of locally free sheaves with amplitude non-positive degrees.
	Then 
	\begin{equation*}
		[\Tot_{\FB}(\CF^{\bullet})] = [\FB] q^{\chi(\mathcal{F})}
	\end{equation*}
\end{lemma}
If $\FB$ has the resolution property, then every two-term complex of coherent sheaves satisfies the assumption.
See \cite{thomason1987equivariantresolutionlinearization,totaro2008resolution,gross2017tensorgeneratorsschemes} for general criteria for stacks to have the resolution property.
\begin{proof}\footnote{Thank you to the referee for suggesting this argument.}
	Up to stratifying with respect to an open cover, we can assume
	without loss of generality that there is a resolution $\CE^\bullet \to \CF^\bullet$
	such that $\CE^i \cong \CO_{\FB}^{n_i}$ are trivial vector bundles and $\CE^{i} = 0$ for $i > 0$.
	Taking inspiration from the proof of \cite[Lemma~3.3]{garcia-prada2014MotivesModuliChains}
	we stratify further along loci $Z_{r_{1},r_{2}}$ for which the differential 
	$d_{\CE}^{-i}$ of $\CE^\bullet$ has constant rank $r_i$ for $i =1,2$.
	Along the strata $Z_{r_{1},r_{2}}$, the truncation $\tau^{\geq -1}\CE$,
	which is quasi-isomorphic to $\CE^\bullet$,
	is a two-term complex of vector bundles.
	After applying \cite{garcia-prada2014MotivesModuliChains} to this complex,
	we deduce the result by consolidating the stratifications.
\end{proof}

We compute the E-polynomial of the complement of the zero-section via the cut-and-paste relation.
\medskip

Let $(\CQ,\CK)$ be the tautological pair of objects over 
$(\ModuliStack_1^{\st})^{\times 2}$.
We have the tautological Hom-sheaf $\SHom(\CQ,\CK)$
on $(\ModuliStack_{1}^{\st})^{\times 2}$, defined as follows.
For every morphism $t\colon U \to (\ModuliStack_1^{\st})^{\times 2}$ 
out of an affine scheme $U$
the coherent sheaf $t^{\ast}\SHom(\CQ,\CK)$ is defined to be the 
sheaf associated to the coherent sheaf 
$\SHom_{\sC_U}(t^{\ast}\CQ,t^{\ast}\CK)$ on $U$. 
We define the complex of coherent sheaves $\R \SHom (\CQ,\CK)$ similarly.

We give a few more details for how to construct these complexes in each of the Settings~1-4.
In Settings \ref{set-Higgs} and \ref{set-cpt2CY} we have the universal object
$\CE$ living over $\ModuliStack_1^{\st} \times X$ where $X = C$ in the case of Setting~\ref{set-Higgs} or $X=S$ in the case of Setting~\ref{set-cpt2CY}.
Define the tautological objects as pullbacks $\CQ\coloneqq \pr_{13}^{*}\CE$ and $\CK \coloneqq \pr_{23}^{*}\CE$, where $\pr_{23},\pr_{13}\colon \ModuliStack_{1}^{\st} \times \ModuliStack_{1}^{\st} \times X \to \ModuliStack_{1}^{\st} \times X$ are the projections.
Then the complex $\R\SHom(\CQ,\CK)$ is the complex $(\pr_{12})_{*}\R\SHom_{(\ModuliStack_{1}^{\st}\times X)^{\times 2}}(\CQ,\CK)$.

In Settings \ref{set-pi_1-mod} and \ref{set-preproj} the moduli stacks $\ModuliStack_{1}^{\st}$ parametrise representations of an algebra $A$: $\CC[\pi_1(\Sigma_g \setminus p)]$ in Setting~\ref{set-pi_1-mod} and $\Pi_{S_g}$ in Setting~\ref{set-preproj}. ~
We consider the tautological vector bundle $\CV$ over $\ModuliStack_1^{\st}$. Over a point of $\ModuliStack_1^{\st}$ corresponding to a representation $\rho$, $\CV_{[\rho]}$ is the underlying vector space of the representation.
There is a morphism of algebras $A \to \End_{\ModuliStack_1^{\st}}(\CV)$ which endows $\CV$ with the structure of a $A\otimes \CO_\ModuliStack$-module. 
Pulling back to $\ModuliStack_1^{\st} \times \ModuliStack_1^{\st}$ we obtain the tautological objects $\CQ \coloneqq \pr_{1}^{*}\CV$ and $\CK \coloneqq \pr_2^*\CV$ which are $A \otimes \CO_{(\ModuliStack_1^{\st})^{\times 2}}$-modules.
The complex $\R\SHom(\CQ,\CK)$ is the complex $\R\SHom_{A \otimes \CO_{(\ModuliStack_{1}^{\sst})^{\times 2}}}(\CQ,\CK)$.

\begin{lemma}
\label{lm-exact_picard_stack}
The edge term morphism 
$\epsilon\colon \ExactStack \to (\ModuliStack_{1}^{\st})^{\times 2}$
is isomorphic to the Picard stack over
$(\ModuliStack_{1}^{\st})^{\times 2}$
associated to the two-term complex
\begin{equation}
\label{eq-ext_complex}
\tRHom = \tau^{\leq 0}\R \SHom(\CQ,\CK)[1].
\end{equation}
Under this isomorphism $\ExactSplit \into \ExactStack$ is the zero-section and $\ExactNonSplit$ is its complement.
\end{lemma}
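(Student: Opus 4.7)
The plan is to exhibit $\epsilon\colon \ExactStack \to (\ModuliStack_1^{\st})^{\times 2}$ as the Picard stack associated to $\tRHom$, using the classical correspondence between short exact sequences, their Yoneda classes in $\Ext^1$, and their automorphisms (which form a $\Hom$-torsor). This is a standard, but not entirely trivial, translation from representation-theoretic data into stack-theoretic language.

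First I would construct a morphism $\Phi\colon \ExactStack \to \Tot_{(\ModuliStack_1^{\st})^{\times 2}}(\tRHom)$ covering $\epsilon$. For a test morphism $u\colon U \to (\ModuliStack_1^{\st})^{\times 2}$ classifying a pair $(\CQ_U, \CK_U)$, a $U$-point of $\ExactStack$ over $u$ is a $U$-family of short exact sequences $0 \to \CK_U \to \mathcal{E}_U \to \CQ_U \to 0$. Its Yoneda extension class defines a section over $U$ of $H^0(u^{*}\tRHom) = H^1(u^{*}\R\SHom(\CQ, \CK))$, hence a $U$-point of the Picard stack.

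Next I would check that $\Phi$ is an isomorphism of stacks over $(\ModuliStack_1^{\st})^{\times 2}$. Essential surjectivity on $U$-points is the classical Yoneda reconstruction: every $\Ext^1$-class can be represented by an actual short exact sequence, realized, for instance, as a pullback along a representing cocycle. Full faithfulness reduces to the well-known identification of the automorphism group of an extension $0 \to K \to E \to Q \to 0$ fixing $K$ and $Q$ with $\Hom(Q, K)$ via $\phi \mapsto \id_E + i\phi p$, where $i, p$ denote the inclusion and projection; this precisely matches the automorphism groups in the Picard stack, namely sections of $H^{-1}(u^{*}\tRHom) = \SHom(\CQ_U, \CK_U)$. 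For the second assertion, the zero extension class corresponds under Yoneda to the split short exact sequence with middle term $\CK_U \oplus \CQ_U$; hence $\ExactSplit \into \ExactStack$ is carried isomorphically onto the zero-section $\Tot(\tau^{\leq -1}\tRHom) \into \Tot(\tRHom)$, and $\ExactNonSplit$ onto its open complement.

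The main obstacle is making the extension-class construction functorial in $U$, that is, realizing $\Phi$ as an honest morphism of stacks rather than just at the level of $U$-groupoids. This requires a global functorial presentation of $\tau^{\leq 0}\R\SHom(\CQ, \CK)$ by a genuine two-term complex of coherent sheaves on $(\ModuliStack_1^{\st})^{\times 2}$ whose sections classify extension classes and automorphisms in families. In each of Settings~1-4, $\sC$ is a concrete category of sheaves or quiver representations, so the relative derived Hom-sheaves exist and the two-term model is standard; some bookkeeping in relative homological algebra is needed to glue the local identifications into the global isomorphism $\Phi$.
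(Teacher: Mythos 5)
The paper states this lemma without proof, treating it as standard, so there is no argument of the author's to compare against; what you have written is precisely the argument the paper implicitly invokes. Your identifications are the correct ones: equivalence classes of extensions of $Q$ by $K$ correspond to $\Ext^1(Q,K)=H^0(\tRHom)$, automorphisms of an extension inducing the identity on $Q$ and $K$ correspond to $\Hom(Q,K)=H^{-1}(\tRHom)$ via $\phi\mapsto \id_E+i\phi p$ (a group homomorphism since $pi=0$), and the zero class corresponds to the split extension, so $\ExactSplit$ is carried to the zero-section $\Tot(\tau^{\leq -1}\tRHom)$ as claimed. On the technical point you flag at the end: since the Picard stack $\Tot_{\FB}(\CF^{0})/\Tot_{\FB}(\CF^{-1})$ is a stackification, essential surjectivity need only be checked locally on $U$, which disposes of any global obstruction to realizing a section of $H^0(u^{\ast}\tRHom)$ by an actual family of extensions; and the functorial two-term model is obtained by choosing a presentation of $\tau^{\leq 0}\R\SHom(\CQ,\CK)[1]$ with locally free term in degree $-1$ (as the paper's aside on Picard stacks already requires for Artin-ness), after which the pushout construction sending a degree-$0$ section to an extension is functorial in $U$ and furnishes the quasi-inverse to your $\Phi$.
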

\begin{proof}
See (the proof of) \cite[Proposition~2.3.4]{kapranov2022CohomologicalHallAlgebra}
\end{proof}

\subsection{The locus of non-split semistablizing short exact sequences}
\label{sec-nonsplit_ses}

Over the non-split locus $\SingNonSplit$ 
the middle-term morphism $\pi$ is an isomorphism.
\begin{lemma}
\label{lm-iso_nonsplit}
The morphism of stacks mapping a strictly semistable non-polystable object to its
semistabilizing short exact sequence
\begin{equation*}
\begin{split}
W\colon \SingNonSplit &\longto  \ExactNonSplit\\
E &\longmapsto W(E)
\end{split}
\end{equation*}
is an isomorphism with inverse given by the projection to the middle term 
$\pi\colon \ExactStack \to \SingStack$.
\end{lemma}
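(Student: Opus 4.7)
The plan is to show that $\pi\colon \ExactNonSplit \to \SingNonSplit$ is an isomorphism of stacks and that the morphism $W$ is its inverse. The whole argument hinges on a single uniqueness statement: for any non-split short exact sequence $0 \to K \to E \to Q \to 0$ in $\sC$ with $K, Q$ stable of class $v$, the subobject $K \subset E$ is the \emph{unique} stable subobject of class $v$ in $E$, viewed as a subobject rather than merely up to isomorphism.

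To establish this uniqueness, suppose $K' \subset E$ were another stable subobject of class $v$. The composition $K' \into E \onto Q$ is a morphism between stable objects of the same primitive class, hence either zero or an isomorphism by Schur's lemma. If it is zero, then $K' \subset K$, and since both have class $v$ and $K$ is stable, $K' = K$. If it is an isomorphism, then $K' \into E \onto Q$ splits the sequence, contradicting non-splitness. Hence $K$ is canonically attached to $E$, which immediately yields the set-theoretic inverse $W$ on isomorphism classes of objects.

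With this uniqueness in hand, I would upgrade $W$ to a morphism of stacks by using the same argument in families. Given a $U$-point $\mathcal{E}$ of $\SingNonSplit$, consider the relative Quot stack of flat subfamilies $\mathcal{K} \subset \mathcal{E}$ with fibers stable of class $v$; the pointwise uniqueness above forces this Quot stack to be isomorphic to $U$ itself, and the resulting universal subfamily $\mathcal{K} \into \mathcal{E}$ together with $\mathcal{Q} = \mathcal{E}/\mathcal{K}$ defines the short exact sequence $W(\mathcal{E})$ naturally in $U$. On automorphism groups, the same uniqueness forces any $\phi \in \operatorname{Aut}(\mathcal{E})$ to satisfy $\phi(\mathcal{K}) = \mathcal{K}$, so $\phi$ descends uniquely to a triple of compatible automorphisms of the sequence; conversely, the middle term of such a triple recovers $\phi$. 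Combined with the tautological identities $\pi \circ W = \id$ and $W \circ \pi = \id$ on isomorphism classes, this exhibits $\pi$ and $W$ as mutually inverse isomorphisms.

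The main obstacle is making the families step rigorous: one must verify that the relative Quot stack of class-$v$ subobjects in a strictly semistable, non-polystable family is representable by $U$ (i.e., carries a universal flat subfamily) and that its formation commutes with base change. In each of Settings~\ref{set-Higgs}--\ref{set-preproj} this follows from standard flatness and representability results for the relevant Quot functors, combined with the pointwise uniqueness proved above, but the verification has to be performed in each setting separately using the Ext-quivers of Remark~\ref{rm-ext-quivers}.
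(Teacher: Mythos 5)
The paper gives no proof of this lemma at all --- it is treated as immediate from the uniqueness of the semistabilizing sequence asserted in the preceding discussion --- so there is no argument of the author's to compare yours against. Your pointwise step is correct and is exactly that uniqueness statement: Schur's lemma applied to $K'\into E \onto Q$ shows a strictly semistable non-polystable $E$ has a unique subobject of class $v$, and the same argument shows every automorphism of $E$ preserves it, so $\pi$ is an equivalence on $\CC$-points, both on isomorphism classes and on automorphism groups.

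The gap is in the families step, and it is not just a matter of unfinished routine verifications: the assertion that ``the pointwise uniqueness above forces this Quot stack to be isomorphic to $U$ itself'' fails over the diagonal locus. The tangent space to the Quot fibre at $[K\subset E]$ is $\Hom(K,E/K)$; this vanishes when $K\not\cong Q$, but when $E$ is a non-split self-extension of $L$ it equals $\Hom(L,L)\cong\CC$, so set-theoretic uniqueness of the subobject only makes the relative Quot scheme a non-reduced thickening of $U$, not $U$. Concretely, in the local model of Remark~\ref{rm-ext-quivers} take $E$ given by matrices $X_i=\left(\begin{smallmatrix} a_i & b_i\\ 0 & a_i\end{smallmatrix}\right)$ with not all $b_i=0$, and the constant family over $R=\CC[\epsilon]/(\epsilon^2)$: for every $t\in\CC$ the submodule $R\cdot(e_1+\epsilon t e_2)$ is $X_i$-stable, free of rank one with free quotient, and these submodules are pairwise distinct, while every automorphism of $E\otimes R$ commuting with the $X_i$ is upper triangular and therefore fixes $Re_1$. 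Hence $\pi$ identifies non-isomorphic $R$-points of $\ExactNonSplit$ lying over the same $R$-point of $\SingNonSplit$, i.e.\ over $\DiagNonSplit$ it is a geometric bijection but not an isomorphism of stacks, and no refinement of your Quot argument can make it one there. This weaker geometric-bijection statement is all the E-series computation actually uses, since classes in $K_0(\Stacks)$ are insensitive to nilpotents; but as written your proof (and the literal isomorphism claim) can only be salvaged on $\OffDiagNonSplit$, where $\Hom(K,Q)=0$ makes the Quot scheme unramified over the base and your argument goes through.
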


Consider the four Cartesian squares
\begin{equation*}
\begin{tikzcd}
\ExactStack_\NonIsoPairs \ar[r,hook] \ar[d,"\epsilon_{\NonIsoPairs}"]& 
\ExactStack \ar[d,"\epsilon"]
& \ar[l,hook'] \ExactStack_\IsoPairs \ar[d,"\epsilon_{\IsoPairs}"'] 
\\
\NonIsoPairs 
\ar[d] \ar[r,hook]& 
(\ModuliStack_{1}^{\st})^{\times 2} \ar[d]&
\ar[l,hook'] \IsoPairs \ar[d] %
\\
(\ModuliSpace_{1}^{\st})^{\times 2} \setminus \Delta(\ModuliSpace_{1}^{\st})
\ar[r,hook]& 
(\ModuliSpace_{1}^{\st})^{\times 2} &
\ar[l,"\Delta"',hook'] \ModuliSpace_{1}^{\st},
\end{tikzcd}
\end{equation*}
where $\Delta\colon \ModuliSpace_{1}^{\st} \to (\ModuliSpace_{1}^{\st})^{\times 2}$
is the diagonal. 
The stack $\NonIsoPairs$ parametrizes pairs of non-isomorphic stable objects of class $v$.
The stack $\IsoPairs$ parametrizes pairs of isomorphic 
stable objects of class $v$.

There are isomorphisms of stacks
\begin{equation*}
\begin{split}
\NonIsoPairs &\cong ((\ModuliSpace_{1}^{\st} \times \ModuliSpace_{1}^{\st} ) 
\setminus \Delta(\ModuliSpace_{1}^{\st}))/\GG_m^2, \\
\IsoPairs &\cong \ModuliSpace_{1}^{\st} /\GG_m^2.
\end{split}
\end{equation*}
Thus their E-series are
\begin{equation*}
\begin{split}
E(\NonIsoPairs) &= \frac{E(\ModuliSpace_1^{\st})^2 -E(\ModuliSpace_1^{\st})}{(\EGm)^2},
\\
E(\IsoPairs) &= \frac{E(\ModuliSpace_1^{\st})}{(\EGm)^2}.
\end{split}
\end{equation*}

By Lemmas~\ref{lm-iso_nonsplit}~and~\ref{lm-exact_picard_stack} we have 
for the non-split loci of the stack of short exact sequences
\begin{equation*}
\begin{split}
\OffDiagNonSplit &\cong \ExactNonSplit_{\NonIsoPairs} 
\cong \ExactStack_{\NonIsoPairs} \setminus 
(\ExactStack_\NonIsoPairs \cap \ExactStack_0),
\\
\DiagNonSplit &\cong \ExactNonSplit_{\IsoPairs}
\cong \ExactStack_{\IsoPairs} \setminus
(\ExactStack_\IsoPairs \cap \ExactStack_0).
\end{split}
\end{equation*}
Thus to compute the E-series of $\OffDiagNonSplit$ and $\DiagNonSplit$
it remains to compute the E-series of $\ExactNonSplit_{\NonIsoPairs}$
and $\ExactNonSplit_{\IsoPairs}$.

We apply 
Lemma~\ref{lm-motive_zero_sec_picard_perfect} to the 
 restrictions of $\tRHom$ to the loci $\IsoPairs$ and $\NonIsoPairs$
to compute the E-series of $\ExactNonSplit_\IsoPairs$ and 
$\ExactNonSplit_\NonIsoPairs$.

\begin{lemma}
The restriction of the complex
$\tRHom$
to $\NonIsoPairs$ is quasi-isomorphic to a 
rank $2g-2$ vector bundle supported in degree 0. Hence
\begin{equation*}
E(\OffDiagNonSplit) = \frac{(E(\ModuliSpace_1^{\st})^2 - E(\ModuliSpace_1^{\st}))
(\ELL^{2g-2}-1)}{(\EGm)^2} .
\end{equation*}
\end{lemma}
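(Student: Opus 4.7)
The plan is to use Lemma~\ref{lm-exact_picard_stack} to identify $\epsilon_{\NonIsoPairs}\colon \ExactStack_{\NonIsoPairs} \to \NonIsoPairs$ with the total space of a genuine vector bundle of rank $2g-2$, and then combine this with Lemma~\ref{lm-iso_nonsplit} and the cut-and-paste relation applied to the zero-section.

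The key step is to show that $\tRHom|_{\NonIsoPairs}$ is quasi-isomorphic to a rank $2g-2$ vector bundle concentrated in degree $0$. Fibrewise at a point $(Q,K) \in \NonIsoPairs$ we have $\Hom_{\sC}(Q,K) = 0$: any nonzero morphism between stable objects of the same class would be an isomorphism, but $Q \not\cong K$ by definition of $\NonIsoPairs$. Hence $H^{-1}(\tRHom)|_{(Q,K)} = \Hom(Q,K) = 0$, while $H^0(\tRHom)|_{(Q,K)} = \Ext^1(Q,K)$ has dimension $2g-2$ by the dimension conventions of the setup. The constancy of these dimensions on $\NonIsoPairs$, together with cohomology-and-base-change (or semi-continuity), then yields $H^{-1}(\tRHom)|_{\NonIsoPairs} = 0$ and $H^0(\tRHom)|_{\NonIsoPairs}$ locally free of rank $2g-2$. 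The truncated complex $\tRHom|_{\NonIsoPairs}$ is therefore quasi-isomorphic to this rank $2g-2$ vector bundle placed in degree $0$.

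Granted this, Lemma~\ref{lm-exact_picard_stack} identifies $\epsilon_\NonIsoPairs$ with the associated geometric total space and identifies $\ExactSplit \cap \ExactStack_\NonIsoPairs$ with the zero-section. Applying Lemma~\ref{lm-motive_zero_sec_picard} twice (once to the ambient Picard stack, once to the zero-section) and subtracting gives $[\ExactNonSplit_\NonIsoPairs] = [\NonIsoPairs](\LL^{2g-2} - 1)$. Combined with the isomorphism $\OffDiagNonSplit \cong \ExactNonSplit_\NonIsoPairs$ from Lemma~\ref{lm-iso_nonsplit} and the expression $E(\NonIsoPairs) = (E(\ModuliSpace_1^\st)^2 - E(\ModuliSpace_1^\st))/(\EGm)^2$ computed just above, this yields the claimed formula. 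The only substantive point is the vanishing-and-constancy argument for the $\Ext$-sheaves on the open locus $\NonIsoPairs$; once that is in hand the conclusion is a routine assembly of the preceding lemmas.
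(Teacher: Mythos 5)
Your proposal is correct and follows essentially the same route as the paper: the paper's proof likewise observes that $\Hom(L_1,L_2)=0$ and $\dim\Ext^1(L_1,L_2)=2g-2$ for non-isomorphic stable objects of class $v$, concludes that $\tRHom$ restricted to $\NonIsoPairs$ is a rank $2g-2$ bundle in degree $0$ (using smoothness of $\ModuliStack_1$ where you invoke base change), and then assembles the formula from Lemmas~\ref{lm-exact_picard_stack}, \ref{lm-motive_zero_sec_picard}, \ref{lm-iso_nonsplit} and the computation of $E(\NonIsoPairs)$. The value $2g-2$ you use is the one consistent with the Ext-quiver remark and the rest of the computation (the setup paragraph's stated conventions appear to have $2g$ and $2g-2$ interchanged), so no issue there.
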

\begin{proof}
For all non-isomorphic stable objects of class $v$
$L_1,L_2$ 
we have 
\begin{equation*}
\Hom(L_1,L_2) = 0 
\text{ and } \dim \Ext^1(L_1,L_2) = 2g-2.
\end{equation*}
Thus over $\NonIsoPairs$, the complex $\tRHom$ is  
concentrated in degree 0.
The rank of the degree 0 term is constant and equal to $2g-2$.
Since $\ModuliStack_{1}^{\st}$ is smooth we deduce the lemma.
\end{proof}

\begin{lemma}
The degree $-1$ cohomology of the restriction of the complex
$\tRHom$
to $\IsoPairs$ is a rank $1$ vector bundle and the degree $0$ cohomology is a rank $2g$ vector bundle. 
Hence
\begin{equation*}
E(\DiagNonSplit) = \frac{E(\ModuliSpace_1^{\st}) (\ELL^{2g}-1)}{\ELL(\EGm)^2}.
\end{equation*}
\end{lemma}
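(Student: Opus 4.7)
The plan is to follow the strategy of the previous lemma, identifying $\epsilon_\IsoPairs\colon \ExactStack_\IsoPairs \to \IsoPairs$ as the Picard stack associated to $\tRHom|_\IsoPairs$ via Lemma~\ref{lm-exact_picard_stack}, with $\ExactSplit \cap \ExactStack_\IsoPairs$ as its zero-section; by Lemma~\ref{lm-iso_nonsplit}, $\DiagNonSplit \cong \ExactNonSplit_\IsoPairs$ is the complement of that zero-section. The new feature, compared to the non-isomorphic case, is that every pair in $\IsoPairs$ consists of isomorphic stable objects, so $\Hom(L,L) \neq 0$ and both cohomology sheaves of $\tRHom|_\IsoPairs$ are nontrivial.

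First I pin down the ranks at a geometric point $(L,L)$ of $\IsoPairs$. Simplicity of the stable object $L$ gives $\dim \Hom(L,L) = 1$. Combined with the 2CY duality $\Ext^2(L,L) \cong \Hom(L,L)^{\vee}$ and the fact from the previous lemma that $\dim \Ext^1(L_1, L_2) = 2g - 2$ for non-isomorphic stable pairs of class $v$, the invariance of the Euler pairing on classes forces $\dim \Ext^1(L,L) = 2g$. Hence the coherent cohomology sheaves $H^{-1}(\tRHom|_\IsoPairs)$ and $H^0(\tRHom|_\IsoPairs)$ have constant ranks $1$ and $2g$, respectively. Since $\IsoPairs \cong \ModuliSpace_1^\st/\GG_m^2$ is smooth, the standard criterion that constant-rank coherent sheaves on a smooth base are locally free establishes the vector bundle claim.

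Applying Lemma~\ref{lm-motive_zero_sec_picard} with $r_{-1} = 1$ and $r_0 = 2g$ yields $[\ExactStack_\IsoPairs] = [\IsoPairs]\LL^{2g-1}$ for the total Picard stack and $[\ExactSplit \cap \ExactStack_\IsoPairs] = [\IsoPairs]\LL^{-1}$ for its zero-section, so cut-and-paste gives
\[
[\DiagNonSplit] = [\IsoPairs]\bigl(\LL^{2g-1} - \LL^{-1}\bigr) = \frac{[\IsoPairs](\LL^{2g} - 1)}{\LL}.
\]
Taking E-series and substituting $E(\IsoPairs) = E(\ModuliSpace_1^\st)/(\ELL - 1)^2$ from the computation earlier in the subsection yields the claimed formula. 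The principal obstacle is verifying the vector bundle structure, which reduces to the constant-rank computation at a geometric point; once that is done, the rest of the argument is entirely formal given the preceding setup.
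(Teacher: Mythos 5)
Your proposal is correct and follows essentially the same route as the paper: identify the fiberwise dimensions $\dim\Hom(L,L)=1$ and $\dim\Ext^1(L,L)=2g$, conclude that the cohomology sheaves of $\tRHom$ over $\IsoPairs$ are vector bundles of constant ranks $1$ and $2g$ by smoothness of the base, and then apply Lemma~\ref{lm-motive_zero_sec_picard} together with cut-and-paste and $E(\IsoPairs)=E(\ModuliSpace_1^{\st})/(\EGm)^2$. The only difference is cosmetic: you derive $\dim\Ext^1(L,L)=2g$ from the Euler-pairing argument, whereas the paper simply records these dimensions from its standing assumptions.
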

\begin{proof}
For all stable objects $L$ of class $v$ we have
\begin{equation*}
\Hom(L,L) \cong \CC
\text{ and } \dim \Ext^1(L,L) = 2g.
\end{equation*}
Thus over $\IsoPairs$, the cohomology sheaf of 
the complex  $\tRHom$ in 
degree $-1$ has constant rank equal to $1$ and
in degree $0$ has constant rank equal to $2g$.
For every Setting 1-4, the stacks $\ModuliStack_1^{s}$ have the resolution property.
Thus we can apply Lemma~\ref{lm-motive_zero_sec_picard_perfect} to deduce the identity for $E(\DiagNonSplit)$.
\end{proof}

\begin{corollary}
\label{cor-nonsplitss}
By the cut-and-paste relation for $\SingNonSplit = \OffDiagNonSplit \sqcup \DiagNonSplit$ we have
\begin{equation*}
E(\SingNonSplit) 
=\frac{E(\ModuliSpace_1^{\st})^2}{(\EGm)^2}(\ELL^{2g-2}-1)
+ \frac{E(\ModuliSpace_1^{\st})}{\ELL (\EGm)}(\ELL^{2g-1}+1).
\end{equation*}
\end{corollary}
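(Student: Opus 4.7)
The plan is to apply the cut-and-paste relation to the decomposition $\SingNonSplit = \OffDiagNonSplit \cup \DiagNonSplit$, substitute the two preceding lemmas, and simplify algebraically.

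First I would verify that this is indeed a locally closed decomposition. By construction $\OffDiagNonSplit = \OffDiagStack \cap \SingNonSplit$ and $\DiagNonSplit = \DiagStack \cap \SingNonSplit$, and the decomposition $\SingStack = \OffDiagStack \sqcup \DiagStack$ is the pullback along $\coarsemap_2$ of the open/closed decomposition $\SingSpace = (\SingSpace \setminus \DiagSpace) \sqcup \DiagSpace$ of the singular locus of $\ModuliSpace_2^{\sst}$. Intersecting with the open substack $\SingNonSplit$ gives $\OffDiagNonSplit$ open in $\SingNonSplit$ with closed complement $\DiagNonSplit$. Additivity of the E-series on locally closed decompositions (a consequence of Definition~\ref{def-E-series_stack} together with the long exact sequence in compactly supported cohomology) then yields
\[
E(\SingNonSplit) = E(\OffDiagNonSplit) + E(\DiagNonSplit).
\]

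Next I would plug in the formulae from the two preceding lemmas and place the sum over the common denominator $\ELL(\EGm)^2$. Treating the coefficients of $E(\ModuliSpace_1^{\st})^2$ and $E(\ModuliSpace_1^{\st})$ separately, the factor $\ELL^{2g-1}+1$ emerges from the identity
\[
\ELL^{2g} - \ELL^{2g-1} + \ELL - 1 = (\EGm)(\ELL^{2g-1}+1),
\]
which also accounts for the cancellation of one power of $(\EGm)$ from the denominator of the linear term.

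The hard part is essentially nonexistent: once the open/closed decomposition has been recorded, the proof is bookkeeping in $\ZZ[\ELL, \ELL^{-1}]$. The only care needed is to verify that the rearrangement faithfully reproduces the claimed presentation, in which both summands are grouped by the common factor $\ELL^{2g-1}+1$.
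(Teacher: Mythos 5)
Your strategy is exactly the paper's (the corollary is stated as an immediate consequence of cut-and-paste applied to $\SingNonSplit = \OffDiagNonSplit \sqcup \DiagNonSplit$ together with the two preceding lemmas), and your justification of additivity is fine. The problem is in the final bookkeeping step, which you assert "faithfully reproduces the claimed presentation": it does not. Adding the two lemmas gives
\begin{equation*}
E(\SingNonSplit)
=\frac{(E(\ModuliSpace_1^{\st})^2-E(\ModuliSpace_1^{\st}))(\ELL^{2g-2}-1)}{(\EGm)^2}
+\frac{E(\ModuliSpace_1^{\st})(\ELL^{2g}-1)}{\ELL(\EGm)^2}
=\frac{E(\ModuliSpace_1^{\st})^2}{(\EGm)^2}(\ELL^{2g-2}-1)
+\frac{E(\ModuliSpace_1^{\st})}{\ELL(\EGm)}(\ELL^{2g-1}+1),
\end{equation*}
where your identity $\ELL^{2g}-\ELL^{2g-1}+\ELL-1=(\EGm)(\ELL^{2g-1}+1)$ is indeed what produces the linear term. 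But the quadratic term carries the factor $\ELL^{2g-2}-1$, not $\ELL^{2g-1}+1$: it comes untouched from the first lemma and no simplification applies to it. So the displayed formula in the corollary is wrong in its first summand, and your claim that "both summands are grouped by the common factor $\ELL^{2g-1}+1$" is false. Had you carried out the arithmetic you describe, you would have found the mismatch.

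The evidence that $\ELL^{2g-2}-1$ is the intended coefficient (i.e., that the corollary as printed contains a typo rather than your computation being off) is in the proof of Theorem~\ref{thm-main_calc}: there the quadratic contribution of $E(\SingNonSplit)$ to $E(\SingStack)$ is written as $\tfrac{E(\ModuliSpace_1^{\st})^2}{(\EGm)}E(\PP^{2g-3})=\tfrac{E(\ModuliSpace_1^{\st})^2(\ELL^{2g-2}-1)}{(\EGm)^2}$, and the subsequent totals only balance with that value. A correct writeup of this corollary must therefore either state the corrected coefficient or explicitly flag the discrepancy; a proof that claims to derive the printed formula as it stands cannot be completed.
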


\subsection{The locus of polystables}

Over the split off-diagonal locus $\OffDiagSplit$ the morphism 
$\pi$ is a double cover: 
a direct sum $K\oplus Q$ with non-isomorphic summands,
arises as the middle term of precisely two different 
(isomorphism classes of) short exact sequences with edge terms 
stable objects of class $v$.
\begin{lemma}
There is a commutative diagram with horizontal maps isomorphisms
\begin{equation*}
\begin{tikzcd}
((\ModuliSpace_{1}^{\st})^{\times 2} \setminus \Delta(\ModuliSpace_{1}^{\st}))/\GG_m^2
\ar[r,"\cong"',"h"] \ar[d]& 
\pi^{-1}(\OffDiagSplit) \ar[d,"\pi"] \\
(((\ModuliSpace_{1}^{\st})^{\times 2} \setminus \Delta(\ModuliSpace_{1}^{\st}))
/\GG_m^2 )/(\ZZ/2)
\ar[r,"\cong"',"\tilde{h}"]
&
\OffDiagSplit
\end{tikzcd}
\end{equation*}
where the $\ZZ/2$-action on 
$((\ModuliSpace_{1}^{\st})^{\times 2} \setminus
\Delta(\ModuliSpace_{1}^{\st}))/\GG^2_m$ 
is induced by the $\ZZ/2$-actions on 
$(\ModuliSpace_{1}^{\st})^{\times 2}$ and $\GG_m^2$
given by swapping the two factors.
Thus
\begin{equation*}
E(\OffDiagSplit) = \frac{\ELL E(\Sym^2(\ModuliSpace_{1}^{\st})) + E(\MAlt^2[\ModuliSpace_{1}^{\st}])}{(\EGm)^2(\EPP)} - \frac{\ELL E(\ModuliSpace_{1}^{\st})}{(\EGm)^2(\EPP)}.
\end{equation*}
\end{lemma}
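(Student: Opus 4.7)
The plan has two parts: first establish the commutative diagram with isomorphisms $h$ and $\tilde h$, and then derive the E-polynomial formula.

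For the diagram, I would identify $\pi^{-1}(\OffDiagSplit)$ with $\ExactSplit_{\NonIsoPairs}$, the restriction to $\NonIsoPairs$ of the zero-section of the Picard stack $\epsilon \colon \ExactStack \to (\ModuliStack_1^{\st})^{\times 2}$ from Lemma~\ref{lm-exact_picard_stack}. Indeed, a polystable $K \oplus Q$ with $K \ncong Q$ has exactly two stable subobjects of class $v$, namely $K$ and $Q$, and each yields a split semistabilizing sequence; so $\pi^{-1}(\OffDiagSplit)$ lies in $\ExactSplit$ and has edge terms in $\NonIsoPairs$. Since $\Hom(Q,K)=0$ over $\NonIsoPairs$, the truncation $\tau^{\leq -1}\R\SHom(\CQ,\CK)[1]$ vanishes there, and the zero-section collapses to the base $\NonIsoPairs \cong ((\ModuliSpace_1^{\st})^{\times 2}\setminus \Delta)/\GG_m^2$; this provides $h$. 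Next, the map $\pi|_{\pi^{-1}(\OffDiagSplit)} \to \OffDiagSplit$ is étale of degree two (automorphism groups match as $\GG_m^2$ on both sides, and the fibre at $K\oplus Q$ consists of the two split sequences), and the swap $(K,Q)\leftrightarrow (Q,K)$ on $\NonIsoPairs$ exchanges the two sequences, so the $\ZZ/2$-action realises the Galois involution. The resulting stack quotient is $\tilde h$.

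For the E-polynomial, write $X = \ModuliSpace_1^{\st}$. The identifications above give
\begin{equation*}
\OffDiagSplit \cong (X\times X \setminus \Delta(X))/(\GG_m^2 \rtimes \ZZ/2),
\end{equation*}
with $\ZZ/2$ swapping both the $X$-factors and the $\GG_m$-factors. This is the open substack of $\Sym^2(X/\GG_m) = (X\times X)/(\GG_m^2 \rtimes \ZZ/2)$ complementary to the image $D$ of the diagonal $X \hookrightarrow X\times X$. By cut-and-paste, $E(\OffDiagSplit) = E(\Sym^2(X/\GG_m)) - E(D)$, and Lemma~\ref{lm-epoly_quot_stacks} evaluates the first term as the main summand of the desired formula. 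For $E(D)$, observe that the gauge group $\GG_m^2$ acts trivially on the underlying variety of $\Delta(X)$ and $\ZZ/2$ fixes $\Delta(X)$ pointwise, so
\begin{equation*}
D \cong X \times B(\GG_m^2 \rtimes \ZZ/2) = X \times \Sym^2(B\GG_m).
\end{equation*}
A second application of Lemma~\ref{lm-epoly_quot_stacks}, now with the variety being a point (using $\MSym^2(\point)=\point$ and $\MAlt^2(\point)=0$), yields $E(\Sym^2(B\GG_m)) = \ELL/((\EGm)^2(\EPP))$. Multiplying by $E(X)$ gives the subtracted term in the claimed formula.

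The main obstacle is the correct identification of the stacky structure on the diagonal $D$. A naive guess $D \cong X/\GG_m$ would produce a denominator $\EGm$ instead of $(\EGm)^2(\EPP)$ and thus the wrong answer. The point is that although $\ZZ/2$ fixes the underlying variety $\Delta(X)$ pointwise, it acts nontrivially on the automorphism group $\GG_m\times \GG_m$ by swap; recognising $D$ as $X\times \Sym^2(B\GG_m)$ is precisely what matches the denominator structure, reducing the correction term to a second instance of Lemma~\ref{lm-epoly_quot_stacks}.
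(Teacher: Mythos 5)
Your proposal is correct and takes essentially the same route as the paper: the same cut-and-paste of $\OffDiagSplit$ against $\Sym^2(\ModuliSpace_{1}^{\st}/\GG_m)$, the same identification of the diagonal stratum as $\ModuliSpace_{1}^{\st}\times \Sym^2(B\GG_m)$ (which is indeed the one point where a naive $X/\GG_m$ guess would go wrong), and two applications of Lemma~\ref{lm-epoly_quot_stacks}. The only difference is cosmetic: you obtain $h$ by observing that the zero-section of the Picard stack collapses onto $\NonIsoPairs$ since $\SHom(\CQ,\CK)$ vanishes there, whereas the paper simply writes down $(K,Q)\mapsto (K\into K\oplus Q \onto Q)$ and checks directly that it is an isomorphism.
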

\begin{proof}
The stack $\pi^{-1}(\OffDiagSplit)$ is the stack of split short exact sequences
\begin{equation*}
\begin{tikzcd}
0 \ar[r] & K \ar[r]& E \ar[r] & Q \ar[r] & 0 
\end{tikzcd}
\end{equation*}
such that 
$K$ and $Q$ are non-isomorphic stable objects of class $v$. 
The morphism $h$ maps a pair 
$(Q,K)$ to the short exact sequence
\begin{equation*}
\begin{tikzcd}
0 \ar[r] & K \ar[r]& K \oplus Q\ar[r] 
& Q \ar[r] & 0.
\end{tikzcd}
\end{equation*}
A (quasi-)inverse is induced from the projection $\pi^{-1}(\OffDiagSplit) \to \NonIsoPairs \to (\ModuliSpace_1^{\st}) \setminus \Delta(\ModuliSpace_1^{\st})$. 
There is a $\ZZ/2$-action on the stack $\pi^{-1}(\OffDiagSplit)$ which swaps the roles of subobject and quotient in the split short exact sequence and $\OffDiagSplit \cong \pi^{-1}(\OffDiagSplit)/(\ZZ/2)$.
Under the isomorphism $h$ this agrees with the $\ZZ/2$-action on $((\ModuliSpace_{1}^{\st})^{\times 2} \setminus
\Delta(\ModuliSpace_{1}^{\st}))/\GG^2_m$. Thus we have the desired diagram.

By the cut and paste relation we have 
\begin{equation*}
E((((\ModuliSpace_{1}^{\st})^{\times 2} \setminus 
\Delta(\ModuliSpace_{1}^{\st}))/\GG_m^2)/(\ZZ/2)) = 
E(\Sym^2(\ModuliSpace_{1}^{\st}/\GG_m)) - 
E((\Delta(\ModuliSpace_{1}^{\st})/\GG_m^2)/(\ZZ/2)).
\end{equation*}
Note that both the $\GG_m^2$-action and $\ZZ/2$-action on 
$\Delta(\ModuliSpace_{1}^{\st})$ are trivial, hence
\begin{equation*}
(\Delta(\ModuliSpace_{1}^{\st})/\GG_m^2)/(\ZZ/2) \cong 
\ModuliSpace_{1}^{\st} \times (B\GG_m)^2/\ZZ/2
\cong  
\ModuliSpace_{1}^{\st} \times \Sym^2(B\GG_m).
\end{equation*} 
The required expression for $E(\OffDiagSplit)$ follows from Lemma~\ref{lm-epoly_quot_stacks}.
\end{proof}

Over $\DiagSplit$ the morphism $\pi$ is the 
Zariski-locally trivial $\PP^1$-fibration 
given by the the quotient morphism 
$\ModuliSpace_{1}^{\st}/B \to \ModuliSpace_{1}^{\st}/\GL_2$,
where $B \subset \GL_2$ 
is the subgroup of upper-triangular matrices. 
\begin{lemma}
The split diagonal locus $\DiagSplit$ is isomorphic to
the quotient stack $\ModuliSpace_{1}^{\st}/\GL_2$. Thus
\begin{equation*}
E(\DiagSplit) = \frac{E(\ModuliSpace_1^\st)}{\EGLtwo}.
\end{equation*}
\end{lemma}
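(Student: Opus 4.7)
The plan is to exhibit $\DiagSplit$ as a trivial $\GL_2$-gerbe over $\ModuliSpace_1^{\st}$ and then compute the E-series using that $\GL_2$ is a special algebraic group.

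First I would identify the relevant automorphism groups. A closed point of $\DiagSplit$ corresponds to a polystable object $L^{\oplus 2}$ with $L$ stable of class $v$, and since such $L$ is simple one has $\Hom(L,L)=\CC$, hence
\begin{equation*}
\operatorname{Aut}(L^{\oplus 2}) = \Mat_{2\times 2}(\Hom(L,L))^{\times} = \GL_2.
\end{equation*}
The natural forgetful morphism $\DiagSplit \to \ModuliSpace_1^{\st}$ sending $L^{\oplus 2}\mapsto [L]$ therefore realizes $\DiagSplit$ as a gerbe over $\ModuliSpace_1^{\st}$ banded by $\GL_2$.

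Next I would show this gerbe is trivial by exhibiting a section. Since $\coarsemap_1\colon\ModuliStack_1^{\st}\to \ModuliSpace_1^{\st}$ is a trivial $\GG_m$-gerbe by our standing assumption, I fix a universal family $\CL$ of stable objects of class $v$ on $\ModuliSpace_1^{\st}$. Then $[L]\mapsto \CL\oplus\CL$ defines a section of the forgetful morphism, so
\begin{equation*}
\DiagSplit \cong \ModuliSpace_1^{\st} \times B\GL_2 = \ModuliSpace_1^{\st}/\GL_2
\end{equation*}
with the trivial $\GL_2$-action. This description is consistent with the hint in the statement: the fiber of $\pi$ over a point $L^{\oplus 2}$ parametrizes stable subobjects of $L^{\oplus 2}$ isomorphic to $L$, i.e.\ lines in $\CC^2$, whose stabilizer in $\GL_2$ is the Borel $B$; hence $\pi^{-1}(\DiagSplit) \cong \ModuliSpace_1^{\st}/B$ and $\pi$ is identified with the Zariski-locally trivial $\PP^1 = \GL_2/B$-fibration $\ModuliSpace_1^{\st}/B \to \ModuliSpace_1^{\st}/\GL_2$.

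Finally, since $\GL_2$ is special, the E-series of the quotient stack with trivial $\GL_2$-action is immediate:
\begin{equation*}
E(\DiagSplit) = \frac{E(\ModuliSpace_1^{\st})}{E(\GL_2)} = \frac{E(\ModuliSpace_1^{\st})}{q(q-1)^2(q+1)}.
\end{equation*}
The main obstacle is verifying that the pointwise automorphism computation globalizes to a genuine gerbe structure with the expected band; this is standard once the trivialization of the $\GG_m$-gerbe $\coarsemap_1$ provides a universal $\CL$, and the rest of the argument is essentially bookkeeping with torsors for special groups.
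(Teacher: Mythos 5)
Your proof is correct and takes essentially the same route the paper intends: the paper states this lemma without proof, treating the identification $\DiagSplit \cong \ModuliSpace_{1}^{\st}\times B\GL_2 = \ModuliSpace_{1}^{\st}/\GL_2$ as immediate from $\operatorname{Aut}(L^{\oplus 2}) = \GL_2(\operatorname{End}(L)) = \GL_2$ together with the standing assumption that $\coarsemap_1$ is a trivial $\GG_m$-gerbe (hence a universal family $\CL$ exists), and then computes the E-series using that $\GL_2$ is special with $E(\GL_2)=\EGLtwo$. Your write-up simply supplies these omitted details.
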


\begin{corollary}
\label{cor-splitss}
By the cut-and-paste relation for $\SingSplit = \OffDiagSplit \cup \DiagSplit$ we have
\begin{equation*}
E(\SingSplit)=\frac{\ELL E(\Sym^2(\ModuliSpace_{1}^{\st})) + E(\Lambda^2[\ModuliSpace_{1}^{\st}])}{(\EGm)^2(\EPP)}  
- \frac{E(\ModuliSpace_1^{\st})}{\ELL (\EGm)} .
\end{equation*}
\end{corollary}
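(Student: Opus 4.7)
The plan is to apply the cut-and-paste relation directly to the locally closed decomposition $\SingSplit = \OffDiagSplit \sqcup \DiagSplit$ and then substitute in the two formulas from the preceding lemmas. No new geometric input is required: both strata have already been understood, and the decomposition itself was built into the construction of the stratification.

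Concretely, I would write $E(\SingSplit) = E(\OffDiagSplit) + E(\DiagSplit)$ and plug in
\begin{equation*}
E(\OffDiagSplit) = \frac{\ELL E(\Sym^2(\ModuliSpace_{1}^{\st})) + E(\Lambda^2(\ModuliSpace_{1}^{\st}))}{(\EGm)^2(\EPP)} - \frac{\ELL E(\ModuliSpace_{1}^{\st})}{(\EGm)^2(\EPP)}
\end{equation*}
together with $E(\DiagSplit) = E(\ModuliSpace_1^\st)/\EGLtwo$. The first summand of $E(\OffDiagSplit)$ is exactly the first term in the target formula, so the entire content is to verify that the two remaining summands combine into $-E(\ModuliSpace_1^\st)/(\ELL(\EGm))$.

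This reduces to the identity
\begin{equation*}
-\frac{\ELL}{(\EGm)^2(\EPP)} + \frac{1}{\ELL(\EGm)^2(\EPP)} = \frac{1-\ELL^2}{\ELL(\EGm)^2(\EPP)} = -\frac{1}{\ELL(\EGm)},
\end{equation*}
where the last equality uses $1-\ELL^2 = -(\EGm)(\EPP)$. I expect no real obstacle here; the only point to be careful about is that the two lemmas are indeed stated with compatible conventions so the signs line up, and that the denominator $\EGLtwo = \ELL(\EGm)^2(\EPP)$ matches the common denominator used in the off-diagonal term. Once this bookkeeping is done, the corollary follows in a couple of lines.
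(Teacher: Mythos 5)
Your proof is correct and is exactly the argument the paper intends (the paper gives no separate proof beyond invoking cut-and-paste): adding the two lemmas and simplifying the $E(\ModuliSpace_1^{\st})$-coefficient via $1-\ELL^2=-(\EGm)(\EPP)$ and $\EGLtwo=\ELL(\EGm)^2(\EPP)$ yields the stated formula. Nothing further is needed.
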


\subsection{Adding it all up}

\begin{theorem}
\label{thm-main_calc}
We have
\begin{equation*}
\begin{split}
E(\ModuliStack_{2}^\sst) 
&=E(\ModuliStack_{2}^\st) 
+ \frac{\ELL^{2g-1} +\ELL^{2g-2}-1}{(\EGm)^2(\EPP)}
E(\Sym^2(\ModuliSpace_{1}^{\st})) \\
&\qquad + \frac{\ELL^{2g-1}+\ELL^{2g-2}-\ELL}{(\EGm)^2(\EPP)}
E(\Lambda^2[\ModuliSpace_{1}^{\st}])
 + \frac{\ELL^{2g-2}E(\ModuliSpace_{1}^{\st})}{(\EGm)}.
\end{split}
\end{equation*}

Thus if the cohomological integrality conjecture (Definition~\ref{def-cohintconj}) holds
\begin{equation}
\label{eq-e_stack_minus_sym2}
\begin{split}
E(\BPS_2) 
&=\frac{E(\ModuliSpace_{2}^\st)}{\ELL^{4g-3}} 
+ \frac{E(\PP^{2g-3})}{\ELL^{4g-3}(\EPP)}
E(\Sym^2(\ModuliSpace_{1}^{\st})) \\
&\qquad 
+ \frac{E(\PP^{2g-3})}{\ELL^{4g-4}(\EPP)}
E(\Lambda^2[\ModuliSpace_{1}^{\st}])\\
&\qquad + \frac{E(\ModuliSpace_{1}^{\st})}{\ELL^{2g-1}}.
\end{split}
\end{equation}
\end{theorem}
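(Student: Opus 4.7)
The plan is two-part. The first task is to assemble the stratification
\[
\ModuliStack_2^{\sst} = \ModuliStack_2^{\st} \cup \SingNonSplit \cup \SingSplit
\]
into a single formula for $E(\ModuliStack_2^{\sst})$, and the second is to extract $E(\BPS_{2v})$ using the cohomological integrality conjecture.

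For the stratification part, I apply the cut-and-paste relation to reduce to three already-computed pieces: $E(\ModuliStack_2^{\st}) = E(\ModuliSpace_2^{\st})/(\EGm)$ from the trivial-$\GG_m$-gerbe structure on the stable locus, and the expressions for $E(\SingNonSplit)$ and $E(\SingSplit)$ from Corollaries~\ref{cor-nonsplitss} and~\ref{cor-splitss}. The simplification is routine bookkeeping: the linear term $-E(\ModuliSpace_1^{\st})/(\ELL\,(\EGm))$ coming from the split corollary combines with the matching linear term from the non-split corollary to telescope into $\ELL^{2g-2}E(\ModuliSpace_1^{\st})/(\EGm)$, producing the last summand. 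For the quadratic contributions I apply $E(\ModuliSpace_1^{\st})^2 = E(\Sym^2(\ModuliSpace_1^{\st})) + E(\Lambda^2(\ModuliSpace_1^{\st}))$ to the $E(\SingNonSplit)$ term and combine with $\ELL \cdot E(\Sym^2(\ModuliSpace_1^{\st})) + E(\Lambda^2(\ModuliSpace_1^{\st}))$ from $E(\SingSplit)$; collecting over the common denominator $(\EGm)^2(\EPP)$ produces $\ELL^{2g-1}+\ELL^{2g-2}-1$ on $E(\Sym^2(\ModuliSpace_1^{\st}))$ and $\ELL^{2g-1}+\ELL^{2g-2}-\ELL$ on $E(\Lambda^2(\ModuliSpace_1^{\st}))$.

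For the BPS formula, the degree-$1$ component of the cohomological integrality isomorphism together with $\ModuliStack_1^{\st} \cong \ModuliSpace_1^{\st} \times B\GG_m$ identifies $\BPS_v = \Hc^\bullet(\ModuliSpace_1^{\st}) \otimes \LL^{-g}$, and the degree-$2$ component then reads, at the E-series level,
\[
E(\ModuliStack_2^{\sst})\,\ELL^{4-4g} = E(\BPS_{2v})\cdot\frac{\ELL}{\EGm} + \sigma^2\!\left(\frac{E(\ModuliSpace_1^{\st})\,\ELL^{1-g}}{\EGm}\right).
\]
Solving for $E(\BPS_{2v})$ and substituting the formula from the first part produces the displayed expression. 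The main technical obstacle is the $\sigma^2$: because the factor $\Hc^\bullet(B\GG_m) \otimes \LL$ is infinite-dimensional, the symmetric square does not distribute across the tensor product, so I revert to the explicit plethystic formula $\sigma^2(f)(u,v) = \tfrac12 (f(u,v)^2 + f(u^2,v^2))$ from Section~\ref{prelim}. The identity $E(\ModuliSpace_1^{\st})(u^2,v^2) = E(\Sym^2(\ModuliSpace_1^{\st})) - E(\Lambda^2(\ModuliSpace_1^{\st}))$ then separates the $\Sym^2$ and $\Lambda^2$ contributions, and after cancellation against the corresponding coefficients in the first-part formula the surviving numerators reduce to multiples of $E(\PP^{2g-3}) = (\ELL^{2g-2}-1)/(\EGm)$, producing the advertised BPS formula.
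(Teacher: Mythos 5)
Your first formula is derived exactly as in the paper: cut--and--paste over the stratification, insert Corollaries~\ref{cor-nonsplitss} and~\ref{cor-splitss}, telescope the linear terms to $\ELL^{2g-2}E(\ModuliSpace_1^{\st})/(\EGm)$, and split the quadratic term of $E(\SingNonSplit)$ via $E(\ModuliSpace_1^{\st})^2 = E(\Sym^2)+E(\Lambda^2)$ before collecting over $(\EGm)^2(\EPP)$; the coefficients you report are the correct ones (note that to get them you must use the non-split contribution in the form $\tfrac{E(\ModuliSpace_1^{\st})^2}{\EGm}E(\PP^{2g-3})$, which is what the displayed formula in Corollary~\ref{cor-nonsplitss} should read). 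Your treatment of $\sigma^2$ via the plethystic formula and the identity $E(\ModuliSpace_1^{\st})(u^2,v^2)=E(\Sym^2)-E(\Lambda^2)$ is likewise the paper's Lemma~\ref{lm-epoly_quot_stacks} computation.

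The one point of friction is in the second half. Your displayed integrality relation
$E(\ModuliStack_2^{\sst})\ELL^{4-4g} = E(\BPS_{2v})\cdot\tfrac{\ELL}{\EGm} + \sigma^2(\cdot)$
is the literal $E$-series shadow of the definition of cohomological integrality, and it is correct. But then \emph{solving for $E(\BPS_{2v})$} yields $\tfrac{\EGm}{\ELL}\bigl(E(\ModuliStack_2^{\sst})\ELL^{4-4g}-\sigma^2(\cdot)\bigr)$, whereas the right-hand side of \eqref{eq-e_stack_minus_sym2} is exactly $E(\ModuliStack_2^{\sst})\ELL^{4-4g}-\sigma^2(\cdot)$ with no such factor --- you can see this from the $\Sym^2$ coefficient, where $\tfrac{\ELL^{2g-1}+\ELL^{2g-2}-1}{\ELL^{4g-4}}-\tfrac{1}{\ELL^{2g-3}}=\tfrac{\ELL^{2g-2}-1}{\ELL^{4g-4}}$ gives $\tfrac{E(\PP^{2g-3})}{\ELL^{4g-4}(\EGm)(\EPP)}$ directly, leaving only a single $(\EGm)$ in the denominator. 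So your stated procedure does not literally produce the displayed expression; it produces it multiplied by $\tfrac{\EGm}{\ELL}$. The paper's own proof sidesteps this by simply declaring the subtraction $E(\ModuliStack_2^{\sst})\ELL^{4-4g}-\sigma^2(\cdot)$ to be ``$E(\BPS_2)$'', i.e.\ it is computing $E(\BPS_{2v}\otimes\Hc^{\bullet}(B\GG_m)\otimes\LL)$ and suppressing the factor $E(\Hc^{\bullet}(B\GG_m)\otimes\LL)=\tfrac{\ELL}{\EGm}$; that factor is restored in Theorem~\ref{thm-general_e-poly} and Corollary~\ref{cor-e_BPS_gen}. You should either adopt that convention explicitly (state that \eqref{eq-e_stack_minus_sym2} computes $E(\BPS_{2v})\cdot\tfrac{\ELL}{\EGm}$) or carry the factor $\tfrac{\EGm}{\ELL}$ through to the end; as written, your final sentence contradicts your own displayed relation.
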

\begin{proof}
First using Corollaries~\ref{cor-nonsplitss}~and~\ref{cor-splitss} we gather the terms which are linear in $E(\ModuliSpace_{1}^{\st})$.

\begin{equation*}
\begin{split}
E(\SingStack) &= E(\SingNonSplit) + E(\SingSplit) \\
&=\frac{E(\ModuliSpace_1^{\st})^2}{(\EGm)}E(\PP^{2g-3}) 
+ \frac{\ELL E(\Sym^2(\ModuliSpace_{1}^{\st})) + E(\Lambda^2[\ModuliSpace_{1}^{\st}])}{(\EGm)^2 (\EPP)}+ \frac{\ELL^{2g-2}E(\ModuliSpace_1^{\st})}{(\EGm)}
\end{split}
\end{equation*}

Using 
$E(\ModuliSpace_1^{\st})^2 = 
E(\Sym^2(\ModuliSpace_{1}^{\st})) + E(\Lambda^2[\ModuliSpace_{1}^{\st}])$
we have 
\begin{equation*}
\begin{split}
E(\SingStack)&=\frac{E(\ModuliSpace_1^{\st})^2}{(\EGm)}E(\PP^{2g-3}) 
+ \frac{\ELL E(\Sym^2(\ModuliSpace_{1}^{\st})) + E(\Lambda^2[\ModuliSpace_{1}^{\st}])}{(\EGm)^2(\EPP)} + \frac{\ELL^{2g-2}E(\ModuliSpace_1^{\st})}{(\EGm)}\\
&= \frac{\ELL^{2g-1} + \ELL^{2g-2}-1}{(\EGm)^2(\EPP)}
E(\Sym^2(\ModuliSpace_{1}^{\st})) 
+ \frac{\ELL^{2g-1}+\ELL^{2g-2}-\ELL}{(\EGm)^2(\EPP)}
E(\Lambda^2[\ModuliSpace_1^{\st}]) \\&\qquad+ \frac{\ELL^{2g-2}E(\ModuliSpace_1^{\st})}{(\EGm)}.
\end{split}
\end{equation*}
By the cut-and-paste relation 
$ E(\ModuliStack_{2}^\sst) = 
E(\ModuliStack_{2}^\st) + E(\SingStack)$
we have
\begin{equation*}
\begin{split}
E(\ModuliStack_{2}^\sst) 
&=E(\ModuliStack_{2}^\st) 
+ \frac{\ELL^{2g-1} +\ELL^{2g-2}-1}{(\EGm)^2(\EPP)}
E(\Sym^2(\ModuliSpace_1^{\st})) \\
&\qquad + \frac{\ELL^{2g-1}+\ELL^{2g-2}-\ELL}{(\EGm)^2(\EPP)}
E(\Lambda^2[\ModuliSpace_1^{\st}])
 + \frac{\ELL^{2g-2}E(\ModuliSpace_1^{\st})}{(\EGm)}.
\end{split}
\end{equation*}
Multiply $E(\ModuliStack_2^{\sst})$ by $\ELL^{4(1-g)}$ 
and subtract
\begin{equation*}
\begin{multlined}
E(\Sym^2(\H_c(\ModuliSpace_{1}^{\st}) 
\otimes \LL^{-g} \otimes \Hc^{\bullet}(B\GG_m) \otimes \LL)) 
= \frac{E(\Sym^2(\ModuliSpace_1^{\st}))}
{\ELL^{2g-3}(\EGm)^2(\EPP)} 
+ \frac{E(\Lambda^2[\ModuliSpace_1^{\st}])}
{\ELL^{2g-2}(\EGm)^2(\EPP)}
\end{multlined}
\end{equation*}
to deduce the required expression for $\frac{q}{q-1}E(\BPS_{2})$.
\end{proof}

\subsection{Comparison to intersection cohomology}

We recall Mauri's computation of the intersection cohomology of the coarse moduli spaces $\ModuliSpace_2^{\sst}$.
\begin{proposition}[{\cite[Theorem~1.3]{mauri2021IntersectionCohomologyRank}}]
	\label{thm-Mauri-computation}
	\begin{equation*}
		IE(\ModuliSpace_2^{\sst}) = E(\ModuliSpace_2^{\sst}) + 
		\frac{\ELL^{2g-4}-1}{\ELL^2-1}\left(\ELL^2 E(\Sym^2(\ModuliSpace_{1}^{\st})) +
		\ELL E(\Lambda^2[\ModuliSpace_1^{\st}])\right) + \ELL^{2g-2}E(\ModuliSpace_{1}^{\st})
	\end{equation*}
\end{proposition}
\begin{proof}
	\cite[Theorem~1.3]{mauri2021IntersectionCohomologyRank} is applicable by the stable isosingulariy of the moduli spaces $\ModuliSpace_2^{\sst}$ (Remark~\ref{rm-isosingularity})

The form we give here follows from the identities
\begin{equation*}
	\begin{split}
		E(\Sigma_{\iota})^{+} &= E(\Sym^2(\ModuliSpace_1^{\st}))\\
		E(\Sigma_{\iota})^{-} &= E(\Lambda^2[\ModuliSpace_1^{\st}])
	\end{split}
	\end{equation*}
	which themselves are deduced by considering the ramified double cover \begin{equation*}
		\Sigma_{\iota} = \ModuliSpace_1^{\st} \times \ModuliSpace_1^{\st} \to \Sym^2(\ModuliSpace_1^{\st}) \cong \Sigma.
	\end{equation*}
\end{proof}

\begin{proof}[Proof of Theorem~\ref{thm-general_e-poly}]
	By Proposition~\ref{thm-Mauri-computation} we have
\begin{equation*}
\begin{split}
IE&(\ModuliSpace_2^{\sst}) = E(\ModuliSpace_2^{\sst}) + 
\frac{\ELL^{2g-4}-1}{\ELL^2-1}\left(\ELL^2 E(\Sym^2(\ModuliSpace_{1}^{\st})) +
\ELL E(\Lambda^2[\ModuliSpace_1^{\st}])\right) + \ELL^{2g-2}E(\ModuliSpace_{1}^{\st})\\
&=
E(\ModuliSpace_2^{\st}) 
+ \frac{\ELL^{2g-2}-1}{\ELL^2-1}E(\Sym^2(\ModuliSpace_{1}^{\st}))
+\frac{q(\ELL^{2g-4}-1)}{\ELL^2-1}E(\Lambda^2[\ModuliSpace_1^{\st}]) + \ELL^{2g-2}E(\ModuliSpace_{1}^{\st})
\end{split}
\end{equation*}
where in the second line we use $E(\ModuliSpace_{2}^{\sst}) = E(\ModuliSpace_{2}^{\st}) + E(\Sym^2(\ModuliSpace_{1}^{\st}))$.

We shift by $\LL^{4(g-1)+1}$ (i.e.\ divide by $q^{4g-3}$) and add 
$E(\Lambda^2([\ModuliSpace_{1}^{\st}]\otimes \LL^{-g})) = q^{-2g} E(\Lambda^2[\ModuliSpace_1^{\st}])$
to obtain
\begin{equation}
\label{eq-IHeqBPS}
\begin{split}
\frac{IE(\ModuliSpace_{2}^{\sst})}{\ELL^{4g-3}} + 
\frac{E(\Lambda^2[\ModuliSpace_{1}^{\st}])}{\ELL^{2g}} &=
\frac{E(\ModuliSpace^{\st}_{2})}{\ELL^{4g-3}} + \frac{\ELL^{2g-2}-1}{\ELL^{4g-3}(\ELL^2-1)}E(\Sym^2(\ModuliSpace_{1}^{\st}))
\\&\qquad +
\frac{\ELL^{2g-2}-1}{\ELL^{4g-4}(\ELL^2-1)}E(\Lambda^2[\ModuliSpace_{1}^{\st}])
+ \frac{E(\ModuliSpace_{1}^{\st})}{\ELL^{2g-1}}.
\end{split}
\end{equation}
The RHS of \eqref{eq-IHeqBPS} is equal to the RHS of \eqref{eq-e_stack_minus_sym2} in Theorem~\ref{thm-main_calc}.
Thus 
\begin{equation*}
\begin{split}
\frac{E(\ModuliStack_{2}^\sst)}{\ELL^{4g-4}} &=
\frac{E(\IHc^{\bullet}(\ModuliSpace_{2}^\sst))}{\ELL^{4g-3}}
E(\Hc^\bullet(B\GG_m) \otimes \LL) 
\\&\qquad
+
\frac{1}{\ELL^{2g}}E(\Lambda^2[\ModuliSpace^\st_{1}])
E(\Hc^\bullet(B\GG_m) \otimes \LL)
\\
&\qquad +
\frac{1}{\ELL^{2g}}E(\Sym^2(\Hc^\bullet(\ModuliSpace_{1}^\st)\otimes \H_c^\bullet(B\GG_m) \otimes \LL)).
\end{split}
\end{equation*}
\end{proof}

\section[Application to chi-indpendence checks]{Application to \texorpdfstring{$\chi$}{chi}-independence checks}
\label{sec-comparisons}

Gopakumar--Vafa invariants $n_{g,\beta}$ as defined by Maulik--Toda \cite{maulik2018GopakumarVafaInvariants} are enumerative invariants of one-dimensional sheaves $F$ on a Calabi--Yau 3-fold $X$, which a priori depend on the full Chern character $\operatorname{ch}(F) = (0,0,\beta,\chi)$ of $F$.
The Gopakumar--Vafa invariants $n_{g,\beta}$ are expected to only depend on the curve class $\beta$, i.e., we expect independence of the Euler-characteristic $\chi$, see \cite[Section~3.3]{maulik2018GopakumarVafaInvariants} for more details. 

For a curve $C$, respectively a K3 surface $S$, by considering the local curve $T^\ast C \times \AA^1$, respectively, the local surface $S \times \AA^1$ one defines Gopakumar--Vafa invariants for Higgs bundles on $C$, respectively for the K3 surface $S$, for which $\chi$-independence should hold.

Similarly, BPS-cohomology (when defined) for one-dimensional sheaves on $X$ is expected to be independent of the Euler-characteristic. In fact, $\chi$-independence for BPS-cohomology conjecturally implies $\chi$-independence for Gopakumar--Vafa invariants. 
In this section we show how the formula of Theorem~\ref{thm-general_e-poly} can
be applied to check $\chi$-independence for E-polynomials of  BPS-cohomology.

For work on $\chi$-independence phenomena for BPS-invariants see
\cite{carocci2021BPSInvariantsAdic,
maulik2021CohomologicalChiIndependence,
mellit2020PoincarePolynomialsModuli,kinjo2021CohomologicalChiIndependence}.

\subsection{Higgs bundles}
\label{sec-deg_comparison_higgs}

Let $C$ be a complex smooth connected projective curve of genus 
$g \geq 2$.
We consider the moduli problem of semistable Higgs bundles on $C$.
As in the introduction $\ModuliStack_{r,d}^{\Dol}$ is the moduli stack of rank $r$ degree $d$ Higgs bundles and $\ModuliSpace_{r,d}^{\Dol}$ is the coarse moduli space. Let $\BPS_{r,d}^{\Dol}$ be the BPS cohomology. The BPS cohomology is well-defined by \cite[Theorem~5.16]{kinjo2021CohomologicalChiIndependence}.

We aim to explicitly check $E(\BPS_{2,1}^{\Dol}) = E(\BPS_{2,0}^{\Dol})$, which is already known by \cite[Corollary~5.15]{kinjo2021CohomologicalChiIndependence} (where cohomological $\chi$-independence is shown in general for Gopakumar--Vafa invariants of local curves).

\subsubsection{Rank 2 degree 0 BPS cohomology}
Combining Theorem~\ref{thm-main_e-polynomial} and the computation of the E-polynomial of the stable locus $E(\ModuliSpace_{2,0}^{\Dol,\st})$ in \cite[Theorem~3.7]{kiem2008StringyEfunctionModuli} we determine the E-polynomial $E(\BPS_{2,0}^{\Dol})$ in terms of the genus $g$.
We then simplify the expression to make apparent the equality to $E(\BPS_{2,1}^\Dol)$, which is determined below.

Note that the E-polynomial in \cite[Theorem~3.7]{kiem2008StringyEfunctionModuli}
is for the stable locus of the $\SL_2$-Higgs bundles moduli space. We apply the method explained in 
\cite[Section~4.2]{mauri2021IntersectionCohomologyRank} to convert the  the E-polynomial from the $\SL_2$-Higgs bundles case to the $\GL_2$-Higgs bundles case that we use.

Let $\Jac(C)$ be the Jacobian of the curve $C$. As a first step
write $E(\ModuliSpace_{2,0}^{\Dol,\st})$ so that the contribution of $E(\Jac(C)) = (1-u)^g(1-v)^g$ is clear.

\begin{equation*}
\begin{split}
\frac{E(\ModuliSpace_{2,0}^{\mathrm{st}})}{\ELL^{4g-3}}
&= 
\left(
\frac{(1-u^2v)^g(1-uv^2)^g -\ELL^{g+1}E(\Jac(C))}
{(\ELL-1)^2(\ELL+1)}\right)E(\Jac(C))\\
&\qquad
+\frac{1}{(\ELL-1)(\ELL+1)}
(\ELL E(\Lambda^2[\Jac(C)])+E(\Sym^2(\Jac(C))))
\\&\qquad
+ \frac{(\ELL^g-1)(\ELL^{g-1}-1)}{\ELL^{2g-3}(\ELL-1)(\ELL+1)}
E(\Sym^2(\Jac(C)))
+ \frac{(\ELL^{g-1}-1)(\ELL^{g-2}-1)}{\ELL^{2g-2}(\ELL-1)(\ELL+1)}
E(\Lambda^2[\Jac(C)])
\\&\qquad
+ \frac{(\ELL^{g-1}-1)(\ELL^{g-2}-1)}{\ELL^{g-2}(\ELL-1)}
(E(\Jac(C))^2-E(\Jac(C)))
+ \frac{(\ELL^g-1)(\ELL^{g-1}-1)}{\ELL^{g-1}(\ELL-1)}E(\Jac(C))
\\&\qquad
+\frac{1}{2}E(\Jac(C))\big((1-u)^{g-1}(1-v)^{g-1} 
+ (1+u)^{g-1}(1+v)^{g-1}-2\ELL^{g-1}\big)
\\&\qquad
+E(\Jac(C))\bigg(
\frac{\ELL^{g-1}E(\Jac(C))}{(q-1)^2(q+1)} 
\\&\qquad \qquad
-\frac{(1+u)^{g-1}(1+v)^{g-1}(1-u)(1-v)}{4(q+1)}
- \frac{g-1}{2}\frac{(u+v-2uv)(1-u)^{g-1}(1-v)^{g-1}}{q-1} 
\\&\qquad \qquad
- \frac{4g-7}{4}\frac{E(\Jac(C))}{q-1} 
-\frac{qE(\Jac(C))}{2(q-1)^2}\bigg) 
\end{split}
\end{equation*} 
Now separately gathering terms with factors $E(\Jac(C))^2$,
and a single factor of $E(\Jac(C))=(1-u)^g(1-v)^g$ we have
\begin{equation*}
\begin{split}
\frac{E(\ModuliSpace_{2,0}^{\mathrm{st}})}{\ELL^{4g-3}} &=
\left( \frac{1-q^{g-1}}{q^{g-2}(q-1)}
-\frac{4g-3}{4(q-1)}-\frac{q}{2(q-1)^2}\right)E(\Jac(C))^2
\\&\qquad
+\frac{1}{(\ELL-1)(\ELL+1)}
(\ELL E(\Lambda^2[\Jac(C)])+E(\Sym^2(\Jac(C))))
\\&\qquad
+ \frac{(\ELL^g-1)(\ELL^{g-1}-1)}{\ELL^{2g-3}(\ELL-1)(\ELL+1)}
E(\Sym^2(\Jac(C)))
+ \frac{(\ELL^{g-1}-1)(\ELL^{g-2}-1)}{\ELL^{2g-2}(\ELL-1)(\ELL+1)}
E(\Lambda^2[\Jac(C)])
\\&\qquad
+\bigg( \frac{(1-u^2v)^g(1-uv^2)^g}{(\ELL-1)^2(\ELL+1)}
	-  \frac{(\ELL^{g-1}-1)(\ELL^{g-2}-1)}{\ELL^{g-2}(\ELL-1)}
	+ \frac{(\ELL^g-1)(\ELL^{g-1}-1)}{\ELL^{g-1}(\ELL-1)}
	\\&\qquad\qquad
	+ \frac{1}{2}\big((1-u)^{g-1}(1-v)^{g-1} 
	+ (1+u)^{g-1}(1+v)^{g-1}-2\ELL^{g-1}\big)
	\\&\qquad\qquad
	-\frac{(1+u)^{g-1}(1+v)^{g-1}(1-u)(1-v)}{4(q+1)}
	\\&\qquad\qquad	
	- \frac{g-1}{2}\frac{(u+v-2uv)(1-u)^{g-1}(1-v)^{g-1}}{q-1} 
	 \bigg) E(\Jac(C))
\end{split}
\end{equation*}
Substituting into \eqref{eq-e_stack_minus_sym2} and gathering the $E(\Sym^2(\Jac(C)))$ and $E(\Lambda^{2}[\Jac(C)])$ terms 
we have 
\begin{equation*}
\begin{split}
E(\BPS_{2,0}^\Dol) &= 
\left( \frac{-q^{4g-1}-q^{4g-2}+q^{3g} + q^{3g-1}}{q^{4g-3}(q-1)(q+1)}
-\frac{4g-3}{4(q-1)}-\frac{q}{´2(q-1)^2}\right)E(\Jac(C))^2 
\\&\qquad 
+\frac{q^{4g-1}+q^{4g-2}+q^{4g-3} -q^{3g}-q^{3g-1}}{q^{4g-3}(q-1)(q+1)} E(\Sym^2(\Jac(C)))
\\&\qquad
+\frac{q^{4g-1}+2q^{4g-2}-q^{3g}-q^{3g-1}}{q^{4g-3}(q+1)(q-1)}E(\Lambda^2[\Jac(C)])
\\&\qquad
+\bigg( \frac{(1-u^2v)^g(1-uv^2)^g}{(\ELL-1)^2(\ELL+1)}
\\&\qquad\qquad
+ \frac{1}{2}\big((1-u)^{g-1}(1-v)^{g-1} 
+ (1+u)^{g-1}(1+v)^{g-1}\big)
\\&\qquad\qquad
-\frac{(1+u)^{g-1}(1+v)^{g-1}(1-u)(1-v)}{4(q+1)}
\\&\qquad\qquad	
- \frac{g-1}{2}\frac{(u+v-2uv)(1-u)^{g-1}(1-v)^{g-1}}{q-1} 
\bigg) E(\Jac(C))
\end{split}
\end{equation*}
where two $E(\Jac)$ terms cancelled out with the contribution $E(\ModuliSpace_{1,0}^{\st})/q^{2g-1} = E(\Jac(C))/q^{g-1}$.

Using the identity $E(\Jac(C))^2 = E(\Lambda^2[\Jac(C)]) + E(\Sym^2(\Jac(C)))$ cancels out the first $E(\Jac(C))^2$ term.
\begin{equation*}
	\begin{split}
		E(\BPS_{2,0}^\Dol) &= 
		\left(
		-\frac{4g-3}{4(q-1)}-\frac{q}{2(q-1)^2}\right)E(\Jac(C))^2 
		\\&\qquad 
		+\frac{1}{(q-1)(q+1)} E(\Sym^2(\Jac(C)))
		\\&\qquad
		+\frac{q}{(q+1)(q-1)}E(\Lambda^2[\Jac(C)])
		\\&\qquad
		+\bigg( \frac{(1-u^2v)^g(1-uv^2)^g}{(\ELL-1)^2(\ELL+1)}
		\\&\qquad\qquad
		+ \frac{1}{2}\big((1-u)^{g-1}(1-v)^{g-1} 
		+ (1+u)^{g-1}(1+v)^{g-1}\big)
		\\&\qquad\qquad
		-\frac{(1+u)^{g-1}(1+v)^{g-1}(1-u)(1-v)}{4(q+1)}
		\\&\qquad\qquad	
		- \frac{g-1}{2}\frac{(u+v-2uv)(1-u)^{g-1}(1-v)^{g-1}}{q-1} 
		\bigg) E(\Jac(C))
	\end{split}
\end{equation*}

Expanding 
\begin{equation*}
\begin{split}
E(\Lambda^2[\Jac(C)]) &= \frac{1}{2}(1-u)^{g}(1-v)^{g}\big((1-u)(1-v)(1-u)^{g-1}(1-v)^{g-1} 
\\&\qquad \qquad \qquad \qquad
- (1+u)(1+v)(1+u)^{g-1}(1+v)^{g-1}\big)
\\
E(\Sym^2(\Jac(C))) &= \frac{1}{2}(1-u)^{g}(1-v)^{g}\big((1-u)(1-v)(1-u)^{g-1}(1-v)^{g-1} 
\\&\qquad \qquad \qquad \qquad 
+ (1+u)(1+v)(1+u)^{g-1}(1+v)^{g-1}\big)
\end{split}
\end{equation*}
and combining with the second, third and fourth $E(\Jac(C)) = (1-u)^g(1-v)^g$ terms yields
\begin{equation}
\label{eq-BPS_Higgs_deg0_final}
\begin{split}
E(\BPS_{2,0}^\Dol)&= \frac{(1-u)^g(1-v)^g(1-u^2v)^g(1-uv^2)^g}{(\ELL^2-1)(q-1)}  
+ (1-u)^g(1-v)^g\bigg(
- \frac{(1+u)^g(1+v)^g}{4(q+1)}
 \\
& \qquad
-\frac{g}{2} \frac{(u+v-2uv)(1-u)^{g-1}(1-v)^{g-1}}{q-1} 
- \frac{4g-3}{4}\frac{(1-u)^g(1-v)^g}{q-1} 
 \\
& \qquad
- \frac{1}{2}\frac{q(1-u)^g(1-v)^g}{(q-1)^2}\bigg).
\end{split}
\end{equation}
\subsubsection{Rank 2 degree 1 BPS cohomology}
In degree 1 the BPS cohomology is the shifted cohomology of the 
rank 2 degree 1 coarse moduli space.
A formula for its E-polynomial can be extracted from
\cite[Appendix]{garcia-prada2014MotivesModuliChains} and is given by
\begin{equation*}
\begin{split}
E(\BPS_{2,1}^\Dol) &=\frac{E(\ModuliSpace_{2,1})}{\ELL^{4g-3}}\\
&= E(\Jac(C))\left(\frac{(1-u^2v)^g(1-uv^2)^g-\ELL^g E(\Jac)}
{(\ELL^2-1)(\ELL-1)}+ \sum_{d=1}^{g-1}E(\Sym^{2g-2d-1}(C))\right)\\
&=
\frac{(1-u)^g(1-v)^g(1-u^2v)^g(1-uv^2)^g}{(\ELL^2-1)(q-1)} 
- \frac{\ELL^g(1-u)^{2g}(1-v)^{2g}}{(\ELL^2-1)(q-1)} \\
& \qquad
+ (1-u)^g(1-v)^g\sum_{d=1}^{g-1}
\Coeff_{t^{2g-2d-1}}\left(\frac{(1-ut)^g(1-vt)^g}{(1-t)(1-qt)}\right).
\end{split}
\end{equation*}
The third equality follows from Macdonald's computation of the cohomology of symmetric powers of curves \cite{macdonald1962SymmetricProductsAlgebraic}.
We evaluate 
\begin{equation*}
\begin{split}
\sum_{d=1}^{g-1}
&\Coeff_{t^{2g-2d-1}}\left(\frac{(1-ut)^g(1-vt)^g}{(1-t)(1-qt)}\right)
= \frac{\ELL^g(1-u)^g(1-v)^g}{(q-1)^2(q+1)} - \frac{(1+u)^g(1+v)^g}{4(q+1)}
\\&
-\frac{g}{2} \frac{(u+v-2uv)(1-u)^{g-1}(1-v)^{g-1}}{q-1} 
- \frac{4g-3}{4}\frac{(1-u)^g(1-v)^g}{q-1} 
- \frac{1}{2}\frac{q(1-u)^g(1-v)^g}{(q-1)^2}
\end{split}
\end{equation*}
following Hitchin 
\cite[Proof of Theorem~7.6]{hitchin1987SelfDualityEquationsRiemann}
(see also \cite[Section~3.3]{kiem2008StringyEfunctionModuli}).
Altogether we have
\begin{equation*}
\begin{split}
E(\BPS_{2,1}^\Dol)
&=
\frac{(1-u)^g(1-v)^g(1-u^2v)^g(1-uv^2)^g}{(\ELL^2-1)(q-1)} - \frac{\ELL^g(1-u)^{2g}(1-v)^{2g}}{(\ELL^2-1)(q-1)} 
\\& \qquad
+ (1-u)^g(1-v)^g\bigg(
\frac{\ELL^g(1-u)^g(1-v)^g}{(q-1)^2(q+1)} - \frac{(1+u)^g(1+v)^g}{4(q+1)}
\\&\qquad 
-\frac{g}{2} \frac{(u+v-2uv)(1-u)^{g-1}(1-v)^{g-1}}{q-1} 
- \frac{4g-3}{4}\frac{(1-u)^g(1-v)^g}{q-1} 
\\&\qquad
- \frac{1}{2}\frac{q(1-u)^g(1-v)^g}{(q-1)^2}\bigg).
\end{split}
\end{equation*}
This simplifies slightly to
\begin{equation*}
\begin{split}
E(\BPS_{2,1}^\Dol)&= \frac{(1-u)^g(1-v)^g(1-u^2v)^g(1-uv^2)^g}{(\ELL^2-1)(q-1)}  
+ (1-u)^g(1-v)^g\bigg(
- \frac{(1+u)^g(1+v)^g}{4(q+1)}
 \\
& \qquad
-\frac{g}{2} \frac{(u+v-2uv)(1-u)^{g-1}(1-v)^{g-1}}{q-1} 
- \frac{4g-3}{4}\frac{(1-u)^g(1-v)^g}{q-1} 
 \\
& \qquad
- \frac{1}{2}\frac{q(1-u)^g(1-v)^g}{(q-1)^2}\bigg),
\end{split}
\end{equation*}
which agrees with the expression \eqref{eq-BPS_Higgs_deg0_final} 
for $E(\BPS_{2,0}^\Dol)$. 

\subsubsection{Betti side}%
Via non-abelian Hodge theory for stacks, as developed in \cite{davison2021NonabelianHodgeTheory}, one similarly expects a $\chi$-independence phenomenon on the Betti side. 

Since $\gcd(2,1) = 1$, we have $E(\BPS_{2,1}^{\Betti}) =q^{3-4g} E(\ModuliSpace_{2,1}^{\Betti})$.
The E-polynomial $E(\ModuliSpace_{2,1}^{\Betti})$ was determined in \cite[Corollary~3.6.1]{hausel2008MixedHodgePolynomials} and 
the intersection E-polynomial $IE(\ModuliSpace_{2,0}^{\Betti})$ was determined 
in \cite[Theorem~1.4]{mauri2021IntersectionCohomologyRank}.
Using Theorem~\ref{thm-main_e-polynomial} one can directly check $E(\BPS_{2,0}^{\Betti}) = E(\BPS_{2,1}^{\Betti})$.

\subsection{Sheaves on K3 surfaces}

We can reinterpret some results in \cite{decataldo2021HodgeNumbersGrady} 
as a cohomological $\chi$-independence check for sheaves on K3 surfaces.

Let $S$ be a K3 surface. 
Let $H$ be a sufficiently general polarization of $S$.
Suppose $(S,H)$ is of genus 2, i.e.,
the curves in the linear system $\lvert H \rvert$ are of genus 2.
Let $v\in H^\bullet_{\mathrm{alg}}(S,\ZZ)$ be a primitive Mukai vector with $v^2 = 2$. 
Consider the moduli stack and moduli spaces 
of $H$-Gieseker-semistable sheaves $\ModuliStack_{S,2v}^{\Hsst}$,
$\ModuliSpace_{S,2v}^{\Hsst}$,
and  $\ModuliSpace^{\Hsst}_{S,v}$. %
Let $OG10$ be O'Grady's ten-dimensional sporadic example of a hyper-K\"ahler manifold. By \cite[Lemma~4.1.3]{decataldo2021HodgeNumbersGrady} we have 
\begin{equation*}
IE(\ModuliSpace_{S,2v}^{\Hsst}) =
E(OG10) - \ELL E(\Sym^2(\ModuliSpace_{S,v}^{\Hsst}))
- \ELL^{3}H(\ModuliSpace_{S,v}^{\Hsst}).
\end{equation*}
and by \eqref{eq-e_stack_minus_sym2} we have 
\begin{equation}
\label{eq-BPSvsOG10}
E(\BPS_{S,2v}) =
\frac{E(OG10)}{\ELL^{5}} -\frac{E_{u^2,v^2}(\ModuliSpace_{S,v}^{\Hsst})}{\ELL^4} - 
\frac{E(\ModuliSpace_{S,v}^{\Hsst})}{\ELL^2}.
\end{equation}

Pick a primitive Mukai vector 
$w \in\H^{\bullet}_{\mathrm{alg}}(S,\ZZ)$
satisfying $w^2 = 4 = (2v)^2$.
The moduli space $\ModuliSpace_{S,w}^{\Hsst}$ is smooth 
and deformation equivalent to the Hilbert scheme of 5 points on a K3 surface. 
\begin{corollary}\label{cor-k3chi}
Suppose the cohomological integrality conjecture is true for 
$\CA_{\Coh(S),v}^{\Hsst}$, then
 $$E(\BPS_{S,2v}) =E(\BPS_{S,w}).$$
\end{corollary}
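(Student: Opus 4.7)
The plan is to compute $E(\BPS_{S,w})$ directly from the primitive case of the cohomological integrality conjecture, then use deformation invariance of Hodge numbers for hyper-K\"ahler manifolds to rewrite both sides in terms of Hilbert schemes of points on a K3 surface, and finally reduce the desired equality to a polynomial identity whose ingredients are computed in \cite{decataldo2021HodgeNumbersGrady}.

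First I would observe that, because $w$ is primitive, $\ModuliStack_{S,w}^{\Hsst}$ is a trivial $\GG_m$-gerbe over the smooth hyper-K\"ahler moduli space $\ModuliSpace_{S,w}^{\Hsst}$, so the cohomological integrality conjecture holds automatically and the symmetric algebra decomposition of $\CA_{\Coh(S),w}^{\Hsst}$ collapses to its length-one piece $\BPS_{S,w}\otimes \Hc^\bullet(B\GG_m)\otimes \LL$. Solving for $\BPS_{S,w}$ using the factorization $\Hc(\ModuliStack_{S,w}^{\Hsst}) = \Hc(\ModuliSpace_{S,w}^{\Hsst})\otimes \Hc(B\GG_m)$ and the defining shift of $\CA$ by half the virtual dimension then gives
\begin{equation*}
E(\BPS_{S,w}) = \ELL^{-g_w}\, E(\ModuliSpace_{S,w}^{\Hsst}),\qquad g_w := \tfrac{w^2+2}{2}.
\end{equation*}

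Next I would invoke deformation invariance of Hodge numbers for smooth families of hyper-K\"ahler manifolds. Since $\ModuliSpace_{S,w}^{\Hsst}$ is deformation equivalent to $\operatorname{Hilb}^5(\mathrm{K3})$ and $\ModuliSpace_{S,v}^{\Hsst}$ to $\operatorname{Hilb}^2(\mathrm{K3})$, this yields $E(\ModuliSpace_{S,w}^{\Hsst}) = E(\operatorname{Hilb}^5(\mathrm{K3}))$ and $E(\ModuliSpace_{S,v}^{\Hsst}) = E(\operatorname{Hilb}^2(\mathrm{K3}))$. Substituting these into \eqref{eq-BPSvsOG10} and the formula above, and using the $\lambda$-ring identity $E(\Sym^2 X) - E(\Lambda^2 X) = E_{u^2,v^2}(X)$, the equality $E(\BPS_{S,2v}) = E(\BPS_{S,w})$ becomes, after clearing $\ELL^{g_w}$, the polynomial identity
\begin{equation*}
E(\operatorname{Hilb}^5(\mathrm{K3})) = IE(\ModuliSpace_{S,2v}^{\Hsst}) + \ELL\, E(\Lambda^2(\operatorname{Hilb}^2(\mathrm{K3}))).
\end{equation*}
Equivalently, by \cite[Lemma~4.1.3]{decataldo2021HodgeNumbersGrady}, this takes the form
\begin{equation*}
E(OG10) = E(\operatorname{Hilb}^5(\mathrm{K3})) + \ELL\, E_{u^2,v^2}(\operatorname{Hilb}^2(\mathrm{K3})) + \ELL^3\, E(\operatorname{Hilb}^2(\mathrm{K3})).
\end{equation*}

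The hard part will be establishing this last polynomial identity: it is a reformulation of the explicit computation of $E(OG10)$ in \cite{decataldo2021HodgeNumbersGrady} and, combined with G\"ottsche's formula for $E(\operatorname{Hilb}^n(\mathrm{K3}))$, should be verifiable directly. The remaining ingredients --- cohomological integrality for the primitive class, deformation invariance of Hodge numbers, and the $\lambda$-ring manipulation --- are essentially formal once the shift conventions are correctly tracked.
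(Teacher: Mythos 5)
Your proof is correct and follows essentially the same route as the paper: both reduce the claim, via \eqref{eq-BPSvsOG10}, to the single polynomial identity $E(OG10) = E(\ModuliSpace_{S,w}^{\Hsst}) + \ELL\, E_{u^2,v^2}(\ModuliSpace_{S,v}^{\Hsst}) + \ELL^3 E(\ModuliSpace_{S,v}^{\Hsst})$, which is precisely what the paper imports as \cite[Proposition~6.1.2]{decataldo2021HodgeNumbersGrady}, so the final step needs no independent verification against G\"ottsche's formula. Your explicit normalization $E(\BPS_{S,w}) = \ELL^{-g_w}E(\ModuliSpace_{S,w}^{\Hsst})$ with $g_w=5$ is in fact the correct reading of the paper's definition of BPS cohomology (the paper's phrase ``by definition $E(\BPS_{S,w}) = E(\ModuliSpace_{S,w}^{\Hsst})$'' silently drops this shift, which would otherwise make the two sides of the corollary have different degrees).
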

\begin{proof}
By definition we have $E(\BPS_{S,w}) = q^{-5}E(\ModuliSpace_{S,w}^{H-\sst})$. By \cite[Proposition~6.1.2]{decataldo2021HodgeNumbersGrady} and \eqref{eq-BPSvsOG10} we have
$E(\BPS_{S,2v})=q^{-5}E(\ModuliSpace_{S,w}^{H-\sst})$.
\end{proof}
Therefore, the polynomial $E(\BPS_{S,2v})$ is determined by 
the Hodge numbers of $\ModuliSpace_{S,w}^{\Hsst}$, 
which are recorded in \cite[(103)]{decataldo2021HodgeNumbersGrady}.

\begin{remark}
If we assume the $\chi$-independence conjecture for BPS cohomology, then equation~\eqref{eq-BPSvsOG10} (which follows from a simple application of the decomposition theorem \cite[Lemma~4.1.3]{decataldo2021HodgeNumbersGrady} and Theorem~\ref{thm-main_calc}) together with $\chi$-independence, yields a conjectural computation of the Hodge numbers of $OG10$. 
\end{remark}

\printbibliography
\end{document}